\documentclass[onefignum,onetabnum]{article}
\usepackage[utf8]{inputenc}
\usepackage{amsmath}
\usepackage{amssymb}
\usepackage{xcolor}
\usepackage{epsfig} 
\usepackage{soul}

\usepackage{algorithmic}

\usepackage[raggedright]{titlesec} 
\usepackage{cleveref} 
\usepackage{fancyhdr} 
\usepackage{amsthm} 
\usepackage{algorithm2e} 
\usepackage{url} 
\newtheorem{lemma}{Lemma}
\fancyhf{}
\lhead{Slow passage through a Hopf-like bifurcation}

\newtheorem{theorem}{Theorem}
\usepackage{authblk}


\newcommand{\eps}{\varepsilon}

\newtheorem{remark}{Remark}

\renewcommand{\epsilon}{\varepsilon}

\title{Slow passage through a Hopf-like bifurcation in piecewise linear systems: application to elliptic bursting}

\author[1]{J. Penalva\thanks{j.penalva@uib.cat}}
\author[2]{M. Desroches\thanks{mathieu.desroches@inria.fr}}
\author[1]{A. E. Teruel\thanks{antonioe.teruel@uib.es}}
\author[1]{C. Vich\thanks{catalina.vich@uib.es}}
\affil[1]{Departament de Matem\`atiques i Inform\`atica \& IAC3, Universitat de les Illes Balears, Spain}
\affil[2]{MathNeuro Team, Inria Sophia Antipolis M\'editerran\'ee Research Centre, France}

\usepackage{enumitem}
\setlist[itemize,enumerate,1]{leftmargin=\dimexpr 26pt}

\begin{document}

\maketitle

\begin{abstract}
The phenomenon of slow passage through a Hopf bifurcation is ubiquitous in multiple-timescale dynamical systems, where a slowly-varying quantity replacing a static parameter induces the solutions of the resulting slow-fast system to feel the effect of a Hopf bifurcation with a delay. This phenomenon is well understood in the context of smooth slow-fast dynamical systems. In the present work, we study for the first time this phenomenon in piecewise linear (PWL) slow-fast systems. This special class of systems is indeed known to reproduce all features of their smooth counterpart while being more amenable to quantitative analysis and offering some level of simplification, in particular through the existence of canonical (linear) slow manifolds. We provide conditions for a PWL slow-fast system to exhibit a slow passage through a Hopf-like bifurcation, in link with the number of linearity zones considered in the system and possible connections between canonical attracting and repelling slow manifolds. In doing so, we fully describe the so-called way-in/way-out function. Finally, we investigate this slow passage effect in the Doi-Kumagai model, a neuronal PWL model exhibiting elliptic bursting oscillations.
\end{abstract}

\begin{keywords}
Slow-fast dynamical systems, Piecewise linear dynamics, Delayed loss of stability, Hopf-like bifurcation, Bursting oscillations.
\end{keywords}


\section{Introduction}\label{sec:1intro}

Differential systems involving multiple timescales are used in a wide range of applications to describe the dynamical behavior of a given set of variables. Let us consider a system of the form
\begin{equation}\label{eq:intro_sys}
\begin{split}
\dot{\mathbf{x}}&=F(\mathbf{x},z),\\
\dot{z}&=\eps,
\end{split}
\end{equation}
where $\mathbf{x}\in\mathbb{R}^q$, ${z}\in\mathbb{R},$ $F$ is a sufficient smooth function, and $0 \leq \eps\ll 1$ is a small parameter. The presence of $\eps$ in~\eqref{eq:intro_sys} induces a timescale separation such that the variables $\mathbf{x}$ are said to be \textit{fast} while $z$ is \textit{slow}. If we consider $\eps=0$ in~\eqref{eq:intro_sys}, which yields the so-called \textit{fast subsystem} or \textit{layer problem}, then $z$ becomes a parameter for the differential equation $\dot{\mathbf{x}}=F(\mathbf{x},z)$. An important object is the so-called \textit{critical manifold} $\mathcal{S}_0=\{(\mathbf{x},z):F(\mathbf{x},z)=\mathbf{0}\}$, which corresponds to the $z$-dependent family of fast subsystem equilibria associated with system~\eqref{eq:intro_sys}. In the present work, we consider cases where $\mathcal{S}_0$ possesses a Hopf bifurcation at a certain value $z=z_H$ where the fast subsystem equilibrium loses its stability. Without loss of generality, we further assume that the critical manifold $\mathcal{S}_0$ is a graph over $z$, for $z$ within an interval $I$ that contains $z_H$, hence we can write $\mathcal{S}_0=\{(S_0(z),z):z\in I\}$.

In the full system~\eqref{eq:intro_sys}, $z$ drifts slowly in time, hence there is no Hopf bifurcation anymore. Instead, the system undergoes a \textit{dynamic} (or \textit{delayed}) \textit{Hopf bifurcation} at $z=z_H$, also referred to as a {\it slow passage} through the Hopf bifurcation at $z_H$. One may expect that orbits of system~\eqref{eq:intro_sys} with initial conditions $(\mathbf{x}_0,z_0)$, where $z_0<z_H$ and $\mathbf{x}_0$ close to $S_0(z_0)$, stay close to the critical manifold $\mathcal{S}_0$ for $0<t<\frac{z_H-z_0}{\eps}:=t_H$ and  move away from it as soon as $t>t_H$. Instead, a surprising delay phenomenon occurs, where orbits remain close to $\mathcal{S}_0$ for a long period of time $\tau^*>t_H$, that is, past the fast subsystem bifurcation point, and then they move away from $\mathcal{S}_0$ only after $z^*=z(\tau^*)$ \cite{neishtadt1987persistence,neishtadt1988persistence}. This value $ z^*$ (and the corresponding time $\tau^*$) marks therefore a \textit{delay} in the response of the full system~\eqref{eq:intro_sys} to the fast subsystem instability occurring at $z_H$. This delay depends on $\eps$ and is expected to tend to the bifurcation value $z_H$ as $\eps$ tends to zero, given that, when $ \eps = 0 $, the change of stability occurs exactly at $z_H$.
Nevertheless, different limiting behaviors can be obtained for $z^*(\eps)$, mainly depending on the regularity of the vector field; see \cite{neishtadt2009stability} for details. In particular there can be: no delay; a fixed \textit{maximal} delay in the interval $(z_H,+\infty)$, which is then called a \textit{buffer point}; or the delay may be arbitrarily large.

The phenomenon of slow passage through a Hopf bifurcation (delayed loss of stability) occurs in models coming from a vast range of application areas; for instance, in Neuroscience~\cite{Baer1989}, Physics \cite{Premraj2016}, Population Biology~\cite{kuznetsov1996remarks} or Chemistry~\cite{Holden1993}, to name but a few. Slow-fast dynamical systems in which a delayed loss of stability happens often display complex oscillatory behaviors such as bursting~\cite{HE93-2,HMS85,Rinzel1987,Coombes2005} or mixed-mode oscillations (MMOs)~\cite{Baer1989,diener1991maximal,HB12,Huagan}. In this context, delayed Hopf bifurcation can be related to the \textit{canard} phenomenon~\cite{callot1981chasse,desroches2012,diener1991maximal,10.1007/BFb0085020}.

It is well known that continuous piecewise linear (PWL) systems provide a simple and minimal framework to study nonlinear dynamics. More specifically, one can reproduce a wide range of multiple-timescale dynamics, in particular in the canard regime, with PWL slow-fast systems~\cite{CFT19,desroches:hal-01651907,desroches2013,desroches2016,fernandez2016canard,Prohens2013,ProhensVich}. Over the past few years, we have indeed shown that such systems are able to retain all salient features of their smooth counterpart while allowing for substantial simplification: canonical linear slow manifolds, explicit expressions for solutions and access to quantitative information related to various flight times along solutions.

In the present work, we analyze the slow passage through a Hopf bifurcation in the context of PWL systems, where this bifurcation is known as Hopf-like bifurcation. We perform this analysis using a series of minimal PWL models presenting delayed loss of stability near a fast subsystem Hopf-like bifurcation, and we study the limiting behavior of their respective delay as the singular parameter $\varepsilon$ tends to zero. We then apply our results to a PWL model of elliptic bursting oscillations as this type of bursting relies upon a slow passage through a Hopf bifurcation~\cite{izhikevich2000neural,Su_2003,Izhikevich2000}.

The rest of the manuscript is organized as follows. In~\cref{sec.2regimes} we analyze a minimal PWL system with two linear regions exhibiting delayed loss of stability due to a slow passage through a Hopf-like bifurcation, such that the delay tends to the bifurcation value $z_H$ as $\varepsilon$ tends to zero. By adding a third region allowing for the connection between the attracting and the repelling canonical slow manifolds, we build up in~\cref{sec.3regimes} a minimal PWL model also exhibiting delayed loss of stability, but with an arbitrarily large delay. Then, by adding an equilibrium point on the repelling canonical slow manifold, we analyze the existence of a buffer point and the associated maximal delay. Finally, in \cref{sec:4dk} we apply locally all the previous results to the Doi-Kumagai PWL elliptic bursting model, which we revisit by considering first two and then three linearity zones near the fast subsystem Hopf-like bifurcation point, respectively.
\section{Two-regions system}\label{sec.2regimes}
Let us consider system 
\begin{align}\label{sys:2pieces}
 \dot{x}&=f(x)-y,\nonumber\\
 \dot{y}&=x-z,\\
 \dot{z}&=\varepsilon, \nonumber
\end{align}
where $\varepsilon$ is a small parameter, i.e. $0\leq \varepsilon\ll 1$, and $f$ is the continuous piecewise linear function
\begin{equation}\label{eq:x-nullcina1}
f(x)=\left\{
\begin{array}{rl}
-mx & x \leq  0,\\
kx & x \geq 0,
\end{array}
\right.
\end{equation}
with  $0<k,m<2$. It is therefore a 2-fast 1-slow PWL system of the form given by~\eqref{eq:intro_sys}, whose critical manifold $\mathcal{S}_0$ is the polygonal line 
\begin{equation}\label{eq:crtmfold2pieces}
 \mathcal{S}_0=\left\{
    \begin{array}{ll}
      (x,-mx,x) & \text{if }x\leq  0,\\
      (x,kx,x)  & \text{if }x> 0.
    \end{array}
 \right.
\end{equation}

When $\eps=0$, the fast subsystem of~\eqref{sys:2pieces} 
corresponds to the planar PWL system 
\begin{equation} \label{eq:odeInit}
\begin{array}{rcl}
\dot{x} & = & f(x)-y,\\
\dot{y} & = & x-z,
\end{array}
\end{equation}   
with $z$ acting as a parameter. This system exhibits a unique equilibrium point at $(z,f(z))$ and, when $z\neq 0$, the Jacobian matrix evaluated at the equilibrium point is
\[
 \begin{pmatrix}
  f'(z) & -1\\
  1	& 0
 \end{pmatrix},
\]
where $f'(z)$ is the derivative of the PWL function $f$ at $z$. This equilibrium point changes stability at $z=0$, passing from a stable focus when $z<0$ to an unstable focus when $z>0$. Following Theorem 5 in~\cite{FPT99} the system can exhibit either a supercritical or a subcritical Hopf-like bifurcation at $z=0$, where a family of stable (resp. unstable) limit cycles emerges and exists for all $z>0$ (resp., for all $z<0$). The amplitude of the limit cycles along both these families grows linearly with $\lvert z\rvert$; see also~\cite{simpson2018compendium}.

In order to study the slow passage phenomenon through a Hopf-like bifurcation, we consider the full system~\eqref{sys:2pieces}, in which $z$ is a slow variable, driving the dynamics of the fast subsystem~\eqref{eq:odeInit} through the Hopf-like bifurcation at $z=0$. In fact,  from~\eqref{sys:2pieces}, we only consider $z$ as a slow drift.

In the next result we describe the invariant manifolds of system~\eqref{sys:2pieces}, both before and after perturbation in $\eps$. In particular, we study the normally hyperbolic branches of the critical manifold. We recall that normally hyperbolic manifolds are those where the normal component of the flow dominates the tangential component of the flow.

\begin{lemma}\label{lema:inv.sets.2r}
Let us consider system  \eqref{sys:2pieces}-\eqref{eq:x-nullcina1} with $m\ne k$. For $\eps =0$, the system exhibits two  invariant sets. One invariant set is the critical manifold $\mathcal{S}_0$, which has a normally hyperbolic attracting branch, $S_0^a$, defined for $x<0$ and a normally hyperbolic repelling one, $S_0^r$, defined for $x> 0$. Another invariant set is a cone, $\mathcal{C}$,  with vertex at the origin and foliated by periodic orbits, which is stable when $k<m$ and unstable when $k>m$ (see~\cref{fig:invsets}(a)).

For small enough $\eps>0$, the attracting branch of the critical manifold $\mathcal{S}^a_0$ perturbs to a canonical attracting slow manifold $\mathcal{S}_{\eps}^a$, and the repelling branch of the critical manifold $\mathcal{S}_{0}^r$ perturbs to a canonical repelling slow manifold $\mathcal{S}_{\eps}^r$ where
\begin{equation} \label{eq:S.eps}
    \begin{array}{ll}
    \mathcal{S}_{\eps}^a =  (x,-mx-\eps,x+m\eps) & \text{if }x \leq 0, \\
    \mathcal{S}_{\eps}^r = (x,kx-\eps,x-k\eps) & \text{if }x \geq 0. 
    \end{array}
\end{equation}
These manifolds intersect the plane $\{x=0\}$ at points $\mathbf{p}^a=(0,-\eps,m\eps)$ and $\mathbf{p}^r=(0,-\eps,-k\eps)$, respectively (see~\cref{fig:invsets}(b)).
\end{lemma}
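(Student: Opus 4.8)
The plan is to treat the two regimes $\eps=0$ and $\eps>0$ separately, using in both cases that the vector field of~\eqref{sys:2pieces} is affine inside each of the half-spaces $\{x\le 0\}$ and $\{x\ge 0\}$. For $\eps=0$ I would additionally exploit two features: first, $z$ is then a conserved quantity, so the flow preserves every plane $\{z=z_0\}$, on which it coincides with the planar fast subsystem~\eqref{eq:odeInit} with parameter $z_0$; and second, the $\eps=0$ field is invariant under the positive scaling $(x,y,z)\mapsto(\lambda x,\lambda y,\lambda z)$, $\lambda>0$, because $f$ is positively homogeneous of degree one.

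For $\eps=0$, I would first note that along $\mathcal{S}_0$ one has $x=z$ and $y=f(z)$, so $\dot x=\dot y=\dot z=0$: the critical manifold is a curve of fast-subsystem equilibria, hence invariant. For normal hyperbolicity, I would linearise the full three-dimensional field at a point of each branch; the Jacobian has characteristic polynomial $-\lambda(\lambda^2+m\lambda+1)$ on the branch $\{x<0\}$ and $-\lambda(\lambda^2-k\lambda+1)$ on the branch $\{x>0\}$, so besides the zero eigenvalue (whose eigenvector $(1,-m,1)$, resp.\ $(1,k,1)$, is tangent to $\mathcal{S}_0$) there is a genuinely complex pair with real part $-m/2<0$, resp.\ $+k/2>0$ (complex since $0<k,m<2$); this gives the attracting branch $S_0^a$ and the repelling branch $S_0^r$, away from the corner $x=0$ where normal hyperbolicity is lost and the Hopf-like bifurcation sits. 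For the cone, I would argue that the scaling symmetry maps a periodic orbit on $\{z=z_0\}$ to a periodic orbit on $\{z=\lambda z_0\}$, so a single periodic orbit generates a one-parameter family of periodic orbits forming a cone with vertex at the origin; the existence of such a family, and the supercritical/subcritical alternative, is exactly Theorem~5 in~\cite{FPT99} applied slice by slice. For $k<m$ the bifurcation at $z=0$ is supercritical, the cycles are stable and occur for $z>0$, hence the cone is stable; for $k>m$ it is subcritical, the cycles are unstable and occur for $z<0$, hence the cone is unstable. The hypothesis $m\ne k$ is what rules out the degenerate center case in which every orbit of the planar subsystem at $z_0=0$ would be periodic.

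For $\eps>0$, I would use that inside each half-space the system reads $\dot{\mathbf u}=A\,\mathbf u+\mathbf c$ with $\mathbf u=(x,y,z)$, $\mathbf c=(0,0,\eps)$ and $A$ the matrix above (left, resp.\ right). Since the complex eigenvalues of $A$ give no real eigenvectors, the only real eigendirection is the kernel line $\mathbf v_0=(1,-m,1)$, resp.\ $(1,k,1)$, so every invariant line is parallel to $\mathbf v_0$; such a line $\{\mathbf u_0+s\,\mathbf v_0\}$ is invariant, with induced flow $\dot s=\lambda$, iff $A\mathbf u_0+\mathbf c=\lambda\mathbf v_0$, and comparing third components (the third row of $A$ vanishes) forces $\lambda=\eps$, after which $\mathbf u_0$ is pinned down modulo $\mathbf v_0$, i.e.\ the line is unique. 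A direct check then shows this unique line is, in $\{x\le0\}$, the set $\{(x,-mx-\eps,x+m\eps)\}$ with $\dot{\mathbf u}=\eps\,\mathbf v_0$ along it, that is $\mathcal{S}_\eps^a$, and likewise $\mathcal{S}_\eps^r$ in $\{x\ge0\}$; letting $\eps\to0$ recovers $S_0^a$, resp.\ $S_0^r$. The transverse eigenvalues of $A$ do not depend on $\eps$, so $\mathcal{S}_\eps^a$ is attracting and $\mathcal{S}_\eps^r$ repelling (equivalently, this is persistence of normal hyperbolicity). Finally, setting $x=0$ in the two parametrisations gives $\mathbf p^a=(0,-\eps,m\eps)$ and $\mathbf p^r=(0,-\eps,-k\eps)$.

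The main obstacle is the statement about the cone: asserting both its existence and its stability type requires controlling the return map of the planar PWL fast subsystem through the boundary equilibrium at $z_0=0$, where the Hopf-like bifurcation lives. I expect the cleanest route is to quote Theorem~5 of~\cite{FPT99} (as the paper already does) and combine it with the scaling symmetry and $\dot z=0$, rather than recomputing the half-return maps $e^{\pm\pi k/\sqrt{4-k^2}}$ and $e^{\mp\pi m/\sqrt{4-m^2}}$ by hand. Everything else is a short eigenvalue computation and a direct verification that the stated affine lines are invariant.
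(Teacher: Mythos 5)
Your proposal is correct and follows essentially the same route as the paper: normal hyperbolicity of the two branches via the complex eigenvalue pairs $-\tfrac{m}{2}\pm\tfrac{\sqrt{4-m^2}}{2}\,\mathrm{i}$ and $\tfrac{k}{2}\pm\tfrac{\sqrt{4-k^2}}{2}\,\mathrm{i}$, the cone and its stability type via Theorem~5 of \cite{FPT99} (your degree-one homogeneity/scaling argument is the same reason the paper gives when it notes that the cycle amplitude grows linearly with $\lvert z\rvert$, hence the conic shape), and a direct verification that the stated rays are invariant with velocity $\eps(1,-m,1)$, resp. $\eps(1,k,1)$, for $\eps>0$. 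Your added uniqueness argument for the invariant lines and the explicit eigenvalue check in place of the paper's citation of \cite{ProhensVich} for the ``canonical slow manifold'' conclusion are minor, harmless refinements.
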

\begin{proof}
System \eqref{sys:2pieces}-\eqref{eq:x-nullcina1} can be written as the 3-dimensional PWL system
$$
\dot{\mathbf{u}}=
  \left\{
	\begin{array}{ll}
	  A_{-}\mathbf{u}+\varepsilon\,\mathbf{e}_3 & \text{if } x \leq 0, \\
	  A_{+}\mathbf{u}+\varepsilon\, \mathbf{e}_3 & \text{if }x \geq 0,
	\end{array}
  \right.
$$
where $\mathbf{u}=(x,y,z)^T$, $\mathbf{e}_3=(0,0,1)^T$ and
\[
 A_-=\begin{pmatrix}
      -m & -1 & 0 \\
      1 & 0 & -1\\
      0 & 0 & 0
     \end{pmatrix},\quad
 A_+=\begin{pmatrix}
      k & -1 & 0 \\
      1 & 0 & -1\\
      0 & 0 & 0
     \end{pmatrix}.     
\]

The critical manifold $\mathcal{S}_0$ decomposes as $\mathcal{S}_0^{a} \cup \{0\}\cup \mathcal{S}_0^{r} $, $\mathcal{S}_0^{a}$ being the subset contained in the half-space $\{x<0 \}$ and $\mathcal{S}_0^{r}$ 
the subset contained in $\{x>0\}$.

The stability of the branches $\mathcal{S}_0^a$ and $\mathcal{S}_0^r$ depends on the sign of the real part of the non-null eigenvalues of $A_{-}$ and $A_{+}$. These eigenvalues are $-\frac{m}2\pm\frac {\sqrt{4-m^2}}2 \rm{i}$  and $\frac{k}2\pm\frac {\sqrt{4-k^2}}2 \rm{i}$, respectively. The real part of the first ones is negative hence $\mathcal{S}_0^a$ is a normally hyperbolic attracting branch, while the real part of the second ones is positive and hence $\mathcal{S}_0^r$ is a normally hyperbolic repelling branch. Notice that the origin is an equilibrium point located at the switching plane, where the Jacobian matrix is not defined. Therefore, the origin is not a normally hyperbolic point.

On the other hand, the existence of the stable (resp. unstable) cone $\mathcal{C}$ is a consequence of the supercritical (resp. subcritical) Hopf-like bifurcation exhibited by the fast subsystem  \eqref{eq:odeInit} when $0<k<m<2$ (resp. $0<m<k<2$) at $z=0$; see Theorem 5(d) in \cite{FPT99}, where we consider  $\gamma_L=-\frac{m}{\sqrt{4-m^2}}$, $\gamma_R=\frac{k}{\sqrt{4-k^2}}$. We point out that in the Hopf-like bifurcation, the amplitude of the limit cycle grows linearly with  $\lvert z\rvert$, which guarantees the conic shape.

When $\varepsilon>0$, both rays in expression \eqref{eq:S.eps} are invariant under the flow of the system  \eqref{sys:2pieces}-\eqref{eq:x-nullcina1}.  Moreover, each of these rays is at distance of order $\varepsilon$ to the respective branch of the critical manifold $\mathcal{S}_{0}$. Following \cite{ProhensVich} we conclude that $\mathcal{S}^a_{\varepsilon}$ is a canonical attracting slow manifold and $\mathcal{S}^r_{\varepsilon}$ is a canonical repelling slow manifold.

The expression of the intersection point $\mathbf{p}^a$ (resp. $\mathbf{p}^r$) of the canonical slow manifold $\mathcal{S}^a_{\varepsilon}$ (resp.  $\mathcal{S}^r_{\varepsilon}$)  with the switching plane $\{x=0\}$, follows straightforwardly.
\end{proof}

\begin{figure}[h!]
 \begin{center}
    \includegraphics[scale=0.45]{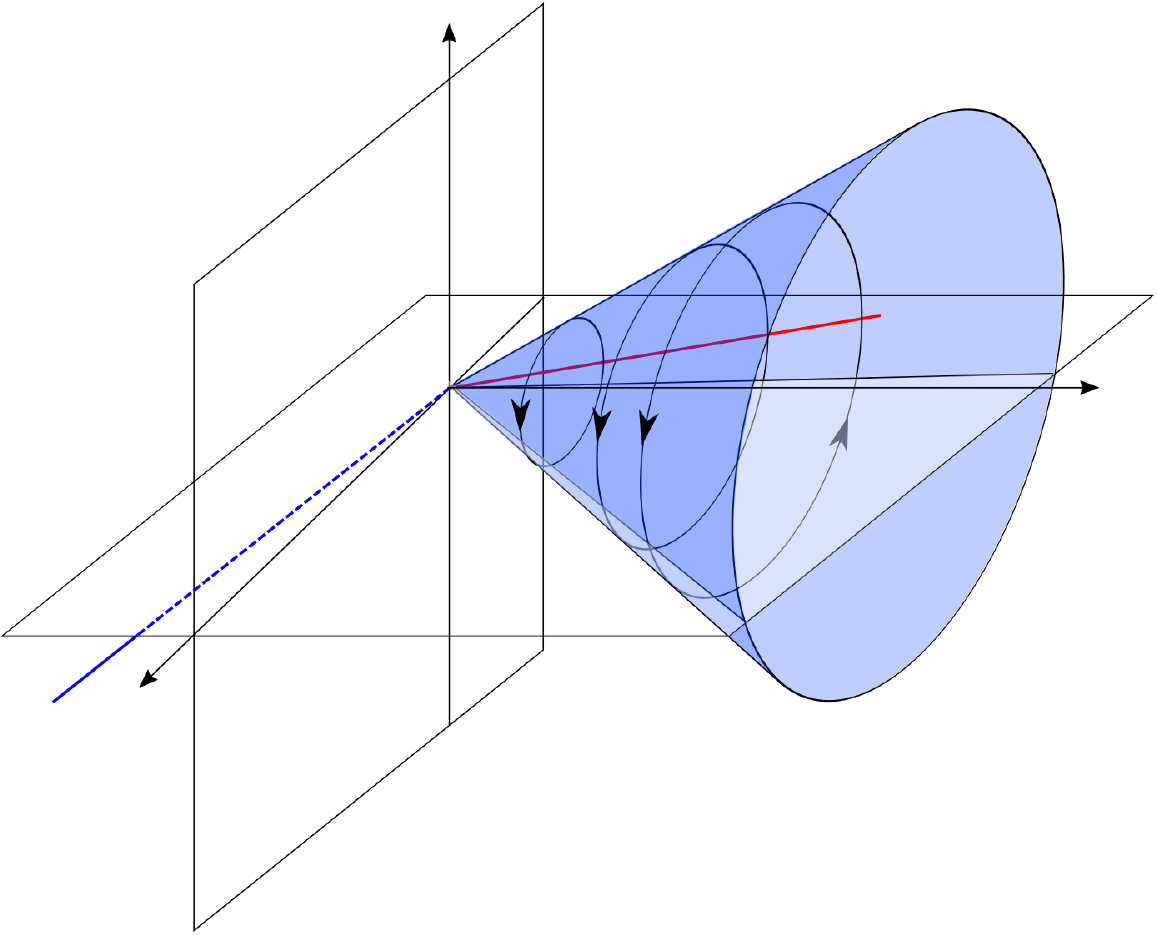}\quad 
    \includegraphics[scale=0.45]{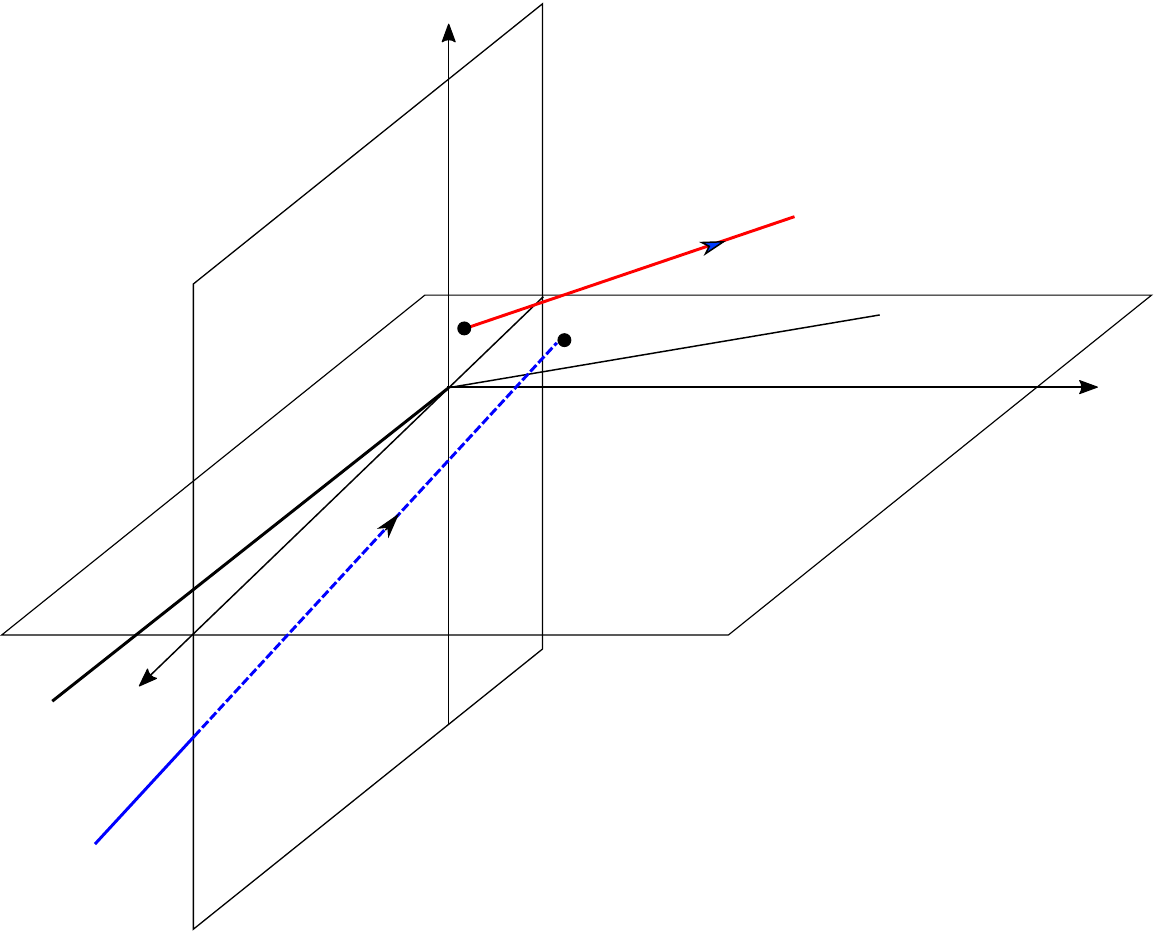}\vspace{0.2cm}\\ \;
    \begin{picture}(0,0)
      \put(-109,0){(a)}
      \put(-112,130){$x$}
      \put(-143,40){$y$}
      \put(-17,80){$z$}
      \put(-167,40){$\mathcal{S}_0^a$}
      \put(-45,100){$\mathcal{S}_0^r$}
      \put(-50,40){$\mathcal{C}$}
      \put(58,0){(b)}
      \put(51,130){$x$}
      \put(16,43){$y$}
      \put(151,80){$z$}
      \put(45,90){$\mathbf{}$}
      \put(14,22){$\mathcal{S}_{\varepsilon}^a$}
      \put(100,120){$\mathcal{S}_{\varepsilon}^r$}
      \put(61,107){$\mathbf{p}^r$}
      \put(74,81){$\mathbf{p}^a$}
      \end{picture}
 \end{center}
 \caption{\textbf{Invariant objects before and after perturbation in $\boldsymbol{\eps}$}: (a) Attracting, $\mathcal{S}_{0}^a$, and repelling, $\mathcal{S}_{0}^r$, branches of the critical manifold, and stable invariant cone $\mathcal{C}$ obtained after a supercritical Hopf-like bifurcation in the fast subsystem, for $k<m$ and $\varepsilon=0$. (b) Attracting canonical slow manifold $\mathcal{S}_{\varepsilon}^a$  and repelling canonical slow manifold $\mathcal{S}_{\varepsilon}^r$ appearing after perturbation of the critical manifold $\mathcal{S}_0$ and their intersection points, $\mathbf{p}^a=(0,-\varepsilon,m\varepsilon)$ and $\mathbf{p}^r=(0,-\varepsilon,-k\varepsilon)$, with the switching plane $\{x=0\}$. The perturbation of the cone is not represented in the figure.} \label{fig:invsets}
\end{figure}

Note that both canonical slow manifolds, $\mathcal{S}_{\eps}^a$ and $\mathcal{S}_{\eps}^r$ (referred as slow manifolds if no confusion arises), do not connect on the plane $\{x=0\}$, since from~\eqref{eq:S.eps} the respective intersection points, $\mathbf{p}^a$ and $\mathbf{p}^r$, remain at a distance $(m+k)\eps$ (see~\cref{lema:inv.sets.2r}). 

In~\cref{thm:du} we will see that this phenomenon forces the delayed loss of stability to behave in a trivial way.
Indeed, consider the local expression of the flow $\mathbf{u}(t;0,(x_0,y_0,z_0))$
with respect to the initial conditions $(x_0,y_0,z_0)$ given by equations \eqref{eq:xL} and \eqref{eq:xR} in~\cref{app:solutions}, depending on $x_0<0$ and $x_0>0$, respectively. For any $t>0$, let $\pi(z_0,t)$ be the plane $\{z=z_0+\varepsilon t\}$. 
Given an initial condition $(0,y_0,z_0)$ close to $\mathbf{p}^r$, we define the {\it distance along the plane} $\pi(z_0,t)$ from the solution $\mathbf{u}(t;0,(0,y_0,z_0))$ to the slow manifold $\mathcal{S}_{\varepsilon}^r$ as
\begin{equation}\label{def:distance1}
d_R(t,(0,y_0,z_0))=\| \mathbf{u}(t;0,(0,y_0,z_0)) - (\hat{x}(t),\hat{y}(t),\hat{z}(t)) \|,
\end{equation}
where $t>0$ and $(\hat{x}(t),\hat{y}(t),\hat{z}(t))$ is the intersection between $\mathcal{S}_{\varepsilon}^r$ and $\pi(z_0,t)$. Similarly, we can define the {\it distance along the plane} $\pi(z_0,t)$ from the solution $\mathbf{u}(t;0,(0,y_0,z_0))$ to the slow manifold $\mathcal{S}_{\varepsilon}^a$ as
\begin{equation}\label{def:distance2}
d_L(t,(0,y_0,z_0))=\| \mathbf{u}(t;0,(0,y_0,z_0)) - (\hat{x}(t),\hat{y}(t),\hat{z}(t)) \|,
\end{equation}
where $t<0$ and $(\hat{x}(t),\hat{y}(t),\hat{z}(t))$ is the intersection between $\mathcal{S}_{\varepsilon}^a$ and $\pi(z_0,t)$.

\begin{lemma}\label{lema:dist}
Let $\mathbf{u}(t;0,(0,y_0,z_0))$ be the solution of system \eqref{sys:2pieces}-\eqref{eq:x-nullcina1} with initial condition $(0,y_0,z_0)$. For $t>0$, as $\mathbf{u}(t;0,(0,y_0,z_0))$ remains in the half-space $\{x>0\}$, the distance along the plane $\pi(z_0,t)$ from $\mathbf{u}(t;0,(0,y_0,z_0))$ to  $\mathcal{S}_{\varepsilon}^r$ satisfies 
\begin{equation} \label{eq:dist_2d}
d_R(t,(0,y_0,z_0)) =  d_R(0,(0,y_0,z_0)) \mathrm{e}^{\frac{k}{2}t}\frac{\sqrt{C(t)}}{\sqrt{C(0)}}, 
\end{equation}
where $\frac{1}{16} <|C(t)|<2$. 

For $t<0$, as $\mathbf{u}(t;0,(0,y_0,z_0))$ remains in the half-space $\{x<0\}$, the distance along the plane $\pi(z_0,t)$ from $\mathbf{u}(t;0,(0,y_0,z_0))$ to  $\mathcal{S}_{\varepsilon}^a$ satisfies 
\begin{equation} \label{eq:dist_2dD}
d_L(t,(0,y_0,z_0)) =  d_L(0,(0,y_0,z_0)) \mathrm{e}^{-\frac{m}{2}t}\frac{\sqrt{D(t)}}{\sqrt{D(0)}}, 
\end{equation}
where $\frac{1}{16} <|D(t)|<2$. 
\end{lemma}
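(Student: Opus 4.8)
The plan is to exploit the fact that, in each half-space, \eqref{sys:2pieces}--\eqref{eq:x-nullcina1} is an affine linear system and that the canonical slow manifolds of \cref{lema:inv.sets.2r} are invariant rays along which $z$ grows at the constant rate $\varepsilon$. Consider first $t>0$. Along $\mathcal{S}_\varepsilon^r$ one has $\dot z=\varepsilon$ and, from \eqref{eq:S.eps}, $x-k\varepsilon=z$, hence $\dot x=\varepsilon$ and $\dot y=k\varepsilon$; therefore the intersection point $\hat{\mathbf u}(t)=(\hat x(t),\hat y(t),\hat z(t))=\mathcal{S}_\varepsilon^r\cap\pi(z_0,t)$ moves as $\hat{\mathbf u}(t)=\hat{\mathbf u}(0)+\varepsilon t\,(1,k,1)^{\!\top}$, which one checks directly to be a solution of $\dot{\mathbf u}=A_+\mathbf u+\varepsilon\mathbf{e}_3$. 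Consequently, as long as $\mathbf u(t):=\mathbf u(t;0,(0,y_0,z_0))$ stays in $\{x>0\}$, the displacement $\mathbf w(t):=\mathbf u(t)-\hat{\mathbf u}(t)$ obeys the \emph{homogeneous} linear equation $\dot{\mathbf w}=A_+\mathbf w$; since $\mathbf u(t)$ and $\hat{\mathbf u}(t)$ both lie in $\pi(z_0,t)$, the third component of $\mathbf w$ vanishes identically, so $\mathbf w$ reduces to a planar solution of $\dot{\mathbf w}=B_+\mathbf w$ with $B_+=\begin{pmatrix}k&-1\\1&0\end{pmatrix}$, and by \eqref{def:distance1} one has $d_R(t,(0,y_0,z_0))=\|\mathbf w(t)\|$.

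The second step is to integrate this planar system explicitly, which is precisely what the closed-form expressions \eqref{eq:xL}--\eqref{eq:xR} of \cref{app:solutions} furnish. By \cref{lema:inv.sets.2r}, $B_+$ has eigenvalues $\tfrac k2\pm\mathrm{i}\,\omega$ with $\omega=\tfrac{\sqrt{4-k^2}}{2}\in(0,1)$, so $\mathrm{e}^{tB_+}=\mathrm{e}^{kt/2}M(t)$ where $M(t)=\cos(\omega t)\,I+\tfrac{\sin(\omega t)}{\omega}\bigl(B_+-\tfrac k2 I\bigr)$. Hence $d_R(t,(0,y_0,z_0))=\mathrm{e}^{kt/2}\,\|M(t)\,\mathbf w(0)\|$; expanding $\|M(t)\mathbf w(0)\|^{2}$ as a trigonometric polynomial in $\cos\omega t$ and $\sin\omega t$ and defining $C(t)$ to be this expression normalised so that $C(0)=1$ (equivalently $C(t)=\|M(t)\mathbf w(0)\|^{2}/\|\mathbf w(0)\|^{2}$), one recovers exactly \eqref{eq:dist_2d}.

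The last — and, I expect, most delicate — step is the two-sided bound $\tfrac1{16}<|C(t)|<2$. I would start from the observation that, since $\operatorname{tr}\!\bigl(B_+-\tfrac k2 I\bigr)=0$ and $\det\!\bigl(B_+-\tfrac k2 I\bigr)=\omega^{2}$, one has $\det M(t)=\cos^2\omega t+\tfrac{\sin^2\omega t}{\omega^2}\,\omega^{2}=1$, so $M(t)\in\mathrm{SL}(2,\mathbb{R})$ and $C(t)$ is squeezed between the two mutually reciprocal squared singular values of $M(t)$; combining this with the explicit entries of $M(t)$ and the standing assumption $0<k<2$ (which forces $\omega\in(0,1)$) then yields the claimed numerical bounds by a direct estimate — this estimate being the technical heart of the statement. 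The case $t<0$ is handled identically in $\{x<0\}$: there the flow is governed by $A_-$, the relevant invariant ray is $\mathcal{S}_\varepsilon^a$ from \eqref{eq:S.eps} (along which $\hat{\mathbf u}(t)=\hat{\mathbf u}(0)+\varepsilon t\,(1,-m,1)^{\!\top}$ is a solution), the planar matrix is $B_-=\begin{pmatrix}-m&-1\\1&0\end{pmatrix}$ with eigenvalues $-\tfrac m2\pm\mathrm{i}\,\tfrac{\sqrt{4-m^2}}{2}$, and the same computation gives \eqref{eq:dist_2dD} together with $\tfrac1{16}<|D(t)|<2$, the exponent now being $-\tfrac m2 t$ because the real part of these eigenvalues is $-\tfrac m2$.
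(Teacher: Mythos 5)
Your first two steps are sound and are, in substance, the paper's own computation in different clothing: the paper plugs in the closed-form solution \eqref{eq:xR} and reads off the decomposition into a point drifting along $\mathcal{S}_\varepsilon^r$ plus an oscillatory term, which is exactly your $d_R(t)=\mathrm{e}^{kt/2}\|M(t)\mathbf{w}(0)\|$ with $\mathbf{w}=\mathbf{u}-\hat{\mathbf{u}}$ solving the homogeneous planar system. The genuine gap is the two-sided bound on $C(t)$, which you yourself flag as ``the technical heart'' and only sketch — and the route you sketch does not work. Knowing $\det M(t)=1$ only squeezes $\|M(t)\mathbf{w}(0)\|^2/\|\mathbf{w}(0)\|^2$ between the reciprocal squared singular values $\sigma_1(t)^{-2}$ and $\sigma_1(t)^{2}$ of $M(t)$, and $\sigma_1(t)^2=\|M(t)\|^2$ is \emph{not} bounded by $2$: already for $k=1$ at $\omega t=\pi/2$ one finds $\|M(t)\|^2=3$, and as $k\to 2$ it grows like $2(k^2+4)/(4-k^2)$, so the reciprocal lower bound likewise drops below $1/16$. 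Worse, with your normalization $C(0)=1$ (i.e. $C(t)=\|M(t)\mathbf{w}(0)\|^2/\|\mathbf{w}(0)\|^2$) the inequality $\tfrac1{16}<|C(t)|<2$ is not even the right statement to aim for: the quantity you have defined is a \emph{ratio} of the oscillatory factor at two different times, and this ratio can exceed $2$ for directions of $\mathbf{w}(0)$ aligned with the expanding singular direction, whereas the lemma's bound concerns an absolute, initial-condition-independent oscillation factor.

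What makes the bound true is structure that the worst-case singular-value estimate throws away. Because the second fast equation gives $w_1=\dot w_2$ and the eigenvalues of $B_+$ have modulus one ($\det B_+=1$), the two components of $\mathbf{w}(t)$ are sinusoids with a \emph{common} amplitude $\vartheta\,\mathrm{e}^{kt/2}$ and a phase difference $\theta_1-\theta_2$ fixed by $k$ alone, independently of $\mathbf{w}(0)$. This is why the paper keeps $C(t)=\cos^2(\theta_1+\xi_k t)+\cos^2(\theta_2+\xi_k t)$ \emph{un}-normalized: a trigonometric identity rewrites it as $1$ plus a sinusoid whose amplitude depends only on $k\in(0,2)$ and is strictly less than $1$, which yields bounds uniform in the initial condition, and then $d_R(0)=\vartheta\sqrt{C(0)}$ gives \eqref{eq:dist_2d}. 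To repair your argument you should therefore abandon the singular-value squeeze, keep the absolute oscillation factor, and compute its amplitude from the fixed phase lag imposed by $w_1=\dot w_2$; once that is done, your treatment of the $t<0$ case with $B_-$ goes through verbatim, as in the paper.
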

\begin{proof}
From expression \eqref{eq:xR}, we write the expression of $\mathbf{u}(t;0,(0,y_0,z_0))$ on the half-space $\{x>0\}$ as
\begin{equation}\label{eq:x0}
\begin{pmatrix} x \\ y \\ z \end{pmatrix} =
\begin{pmatrix} k\varepsilon \\ k^2\varepsilon-\epsilon \\ 0 \end{pmatrix} +
 z \begin{pmatrix} 1 \\ k \\ 1 \end{pmatrix} + \vartheta \mathrm{e}^{\frac k 2 t}
 \begin{pmatrix} \cos(\theta_1+\xi_k t) \\ \cos(\theta_2+\xi_k t) \\ 0 \end{pmatrix},
\end{equation}
where 
\begin{align*}
\vartheta^2 &= \frac{4}{4-k^2}((y_0+\epsilon)^2+(z_0+k\epsilon)(z_0-ky_0)),\\
\tan\theta_1 &= \frac{k(z_0+k\epsilon)-2(y_0+\epsilon)}{\sqrt{4-k^2}(z_0+k\epsilon)},\\
\tan\theta_2 &= \frac{(z_0+k\epsilon)(k^2-2)-k(y_0+\epsilon)}{\sqrt{4-k^2}(k(z_0+k\epsilon)-(y_0+\epsilon))},
\end{align*}
and $\xi_k$ is given in~\cref{app:solutions}.

From \eqref{eq:x0}, the evolution of an orbit in the region $\{x>0\}$ can be obtained as the evolution of a point $(\hat{x},\hat{y},z)$, with $\hat{x}= k\epsilon+z$ and $\hat{y}=k^2\epsilon-\epsilon+k z$, over the repelling branch of the slow manifold $S_{\varepsilon}^r$, plus an oscillatory term with increasing amplitude. Hence, the evolution of the distance from the orbit to the slow manifold is computed as
\[
\sqrt{ (x-\hat{x})^2 + (y-\hat{y})^2} = \vartheta \mathrm{e}^{\frac{k}{2}t}\sqrt{\cos^2(\theta_1+\xi_k t)+\cos^2(\theta_2+\xi_k t)}.
\]
Taking $C(t)=\cos^2(\theta_1+\xi_k t)+\cos^2(\theta_2+\xi_k t)$ and considering different trigonometric identities, we can rewrite $C(t)$ as
\[
C(t) = 1+\frac{1}{2}\sqrt{\frac{k^2+3}{2}}\sin\left(\sqrt{4-k^2}t + \frac{\pi}{2} + \varphi \right),
\]
where
\[
\tan\varphi = \tan\frac{\theta_1+\theta_2}{2}.
\]
Hence, given that the sine function lies between $-1$ and $1$ and that $0<k<2$, we obtain $\frac{1}{16}<|C(t)|<2$. Finally, we take the initial distance $d_R(0,(0,y_0,z_0))=\vartheta\sqrt{C(0)}$, which proves \eqref{eq:dist_2d}. The expression of the distance from the solution $\mathbf{u}(t;0,(0,y_0,z_0))$ along the plane $\pi(z_0,t)$ to the attracting slow manifold $\mathcal{S}_{\varepsilon}^a$ given in \eqref{eq:dist_2dD} follows in a similar way. This concludes the proof of the lemma.
\end{proof}

From \eqref{eq:dist_2d}-\eqref{eq:dist_2dD}, both distances $d_L(t,(0,y_0,z_0))$ and $d_R(t,(0,y_0,z_0))$ depend both on time $t$ and on the initial condition $\mathbf{p}=(0,y_0,z_0)$. As the initial condition gets closer to $\mathbf{p}^r$ along a radial segment (see~\cref{fig:tubular}), the initial distance becomes smaller, and the orbit through $\mathbf{p}$ takes more time to exit from a tubular neighbourhood of  $\mathcal{S}_{\varepsilon}^r$. 
The magnitude $\delta$ of the radius of this tubular neighbourhood is not relevant as long as it remains invariant throughout the analysis, so without loss of generality we consider $\delta=1$. Let $t^*$ be the exit time of the orbit starting at $\mathbf{p}$. Then, the orbit leaves the neighbourhood of $\mathcal{S}_{\varepsilon}^r$ at point $\mathbf{p}_o$ with $z$-coordinate being $z_o=z_0+\varepsilon t^*$. In the particular case where $\mathbf{p}=\mathbf{p}^a$, we write $z_d$ for $z_o$.

\begin{figure}[ht]
\begin{center}
\includegraphics[scale=0.75]{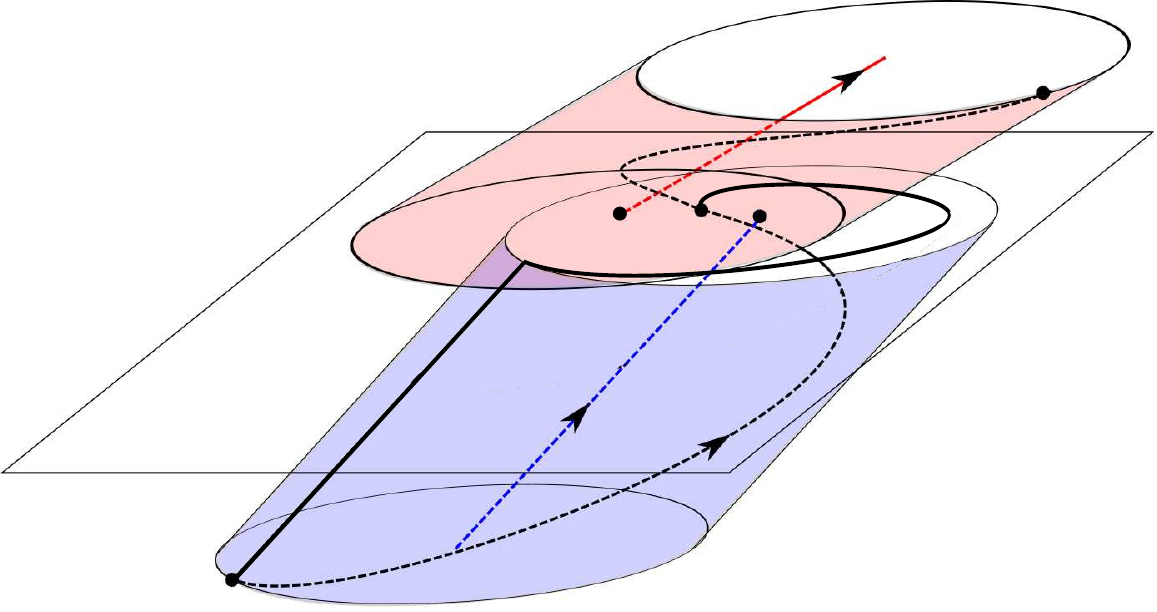}
\begin{picture}(0,0)
\put(0,100){$\{x=0\}$}
\put(-86,84){{\small $\mathbf{p}^a$}}
\put(-133,84){{\small $\mathbf{p}^r$}}
\put(-112,79){$\mathbf{p}$}
\put(-37,118){$\mathbf{p}_o$}
\put(-220,5){$\mathbf{p}_i$}
\put(-190,40){$\mathcal{R}$}
\put(-123,40){$\mathcal{S}_{\varepsilon}^a$}
\put(-87,118){$\mathcal{S}_{\varepsilon}^r$}
\end{picture}
\end{center}
\caption{\textbf{Tubular neighbourhoods of the slow manifolds.} Magnification of~\cref{fig:invsets}(b) and representation of the tubular neighbourhood of $\mathcal{S}_{\varepsilon}^a$ (in blue) and that of $\mathcal{S}_{\varepsilon}^r$ (in red). The orbit through the initial condition $\mathbf{p}=(0,y_0,z_0)$ it is also represented in backward and forward time, together with the entry point $\mathbf{p}_i$ and the exit one $\mathbf{p}_o$ (dashed black curve through $\mathbf{p}$). Solid black line named $\mathcal{R}$ represents a segment parallel to $S_{\varepsilon}^a$ lying on the tubular neighbourhood. Solid spiral lying on the plane $\{x=0\}$ represents the image by the flow of the segment $\mathcal{R}$.
}\label{fig:tubular}
\end{figure}

On the other hand, an inverse process takes place in the half-space $\{x<0\}$, where orbits tend to the attracting branch of the slow manifold, reaching the tubular neighbourhood of radius $\delta=1$ around $\mathcal{S}_{\varepsilon}^a$ at some point denoted by $\mathbf{p}_i$ with $z$-coordinate $z_i$. From this point, the orbit continues tending to $\mathcal{S}_{\varepsilon}^a$ and reaching the plane $\{x=0\}$ at $\mathbf{p}$ (see~\cref{fig:tubular}). If $z_i$ is negative enough, then the orbit approaches the switching plane $\{x=0\}$ very close to $\mathbf{p}^a$.

By construction, if we consider a segment $\mathcal{R}$ along the tubular neighbourhood of $\mathcal{S}_{\varepsilon}^a$, and we continue the flow generated by each point in $\mathcal{R}$, then we obtain a smooth manifold that intersects the switching plane $\{x=0\}$ at a curve spiralling towards $\mathbf{p}^a$ (see~\cref{fig:tubular}). In~\cref{fig:in-out_manifold_segment}(a), we compute this curve by taking different initial conditions on $\mathcal{R}$, which is at distance $\delta=1$ to $\mathcal{S}_{\varepsilon}^a$. We note that, the closer a point is on the spiral to $\mathbf{p}^a$, the smaller $z_i$ is and, hence, the closer $z_o$ is to $z_d$. In~\cref{fig:in-out_manifold_segment}(b), we represent the relation between $|z_i|$ and $z_o$ obtained by considering different initial conditions on $\mathcal{R}$. We call this relation the \textit{way-in/way-out function} since its behavior and properties are compatible with classical way-in/way-out functions considered in smooth slow-fast systems, see e.g.~\cite{krupa2010}. Note that the way-in/way-out function asymptotically tends to the constant value $z_d$, which can be considered as the maximal delay. Since $z_d>0$, this implies that the PWL system that we are considering exhibits a delayed loss of stability. Next, we describe how the maximal delay $z_d$ behaves as $\varepsilon$ tends to zero.

\begin{figure}[h!]
\centering
\begin{tabular}{cc}
    \includegraphics[scale=0.25]{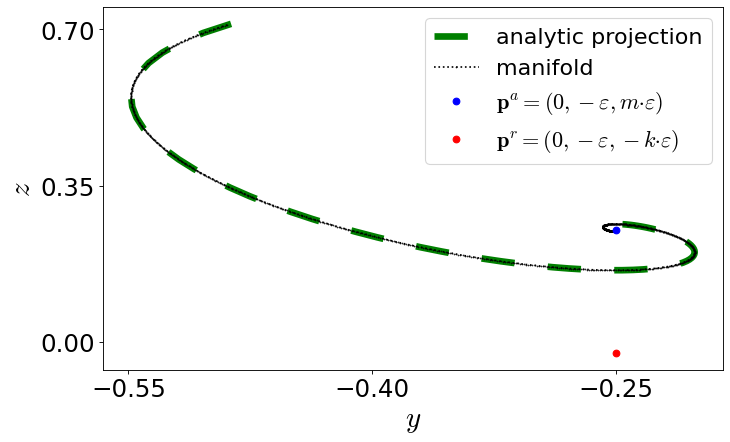}
    &
    \includegraphics[scale=0.25]{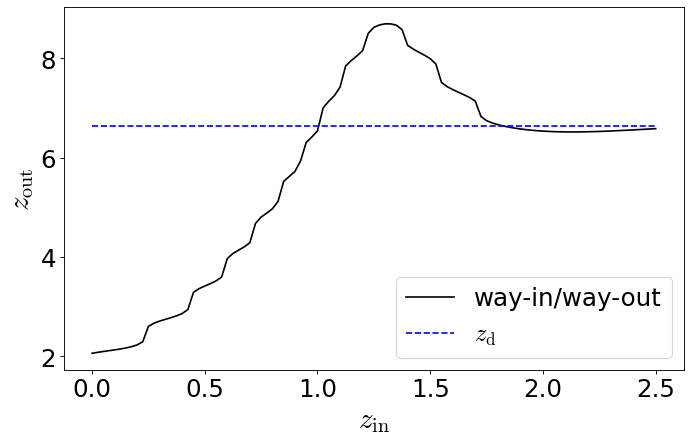} \\
    (a) & (b)
\end{tabular}
\caption{\textbf{Graph of the way-in/way-out function}. Panel (a) displays the spiral formed by the intersection points with the plane $\{x=0\}$ of the orbits starting in the segment $\mathcal{R}=\{(z-m\eps,\eps-m(z-m\eps)+1,z):\ z<0\}$ and represented in~\cref{fig:tubular}. The endpoint of the spiral is given by the point $\mathbf{p}^a$, plotted in blue. The red dot corresponds to the point $\mathbf{p}^r$. In panel (b), we solve the way-in/way-out function for points on the spiral, with initial conditions on the segment $\mathcal{R}$. The blue dotted line corresponds to the maximal delay $z_d$. The values of the remaining parameters are $k=0.1$,
$m=1$, and $\epsilon=0.25$.}
\label{fig:in-out_manifold_segment}
\end{figure}

\begin{theorem}\label{thm:du}
Consider system \eqref{sys:2pieces}-\eqref{eq:x-nullcina1}. Then, the maximal delay $z_d$ satisfies 
\begin{equation} \label{eq:zd2d}
-\frac{2\epsilon}{k}\ln\left(\frac{2\sqrt{2} (m+k)}{\sqrt{4-k^2}}\epsilon\right) + m\epsilon 
< z_d < 
-\frac{2\epsilon}{k}\ln\left(\frac{(m+k)}{2\sqrt{4-k^2}} \epsilon\right) + m\epsilon.
\end{equation}
In particular, $z_d$ tends to zero as $\varepsilon$ tends to zero. 
\end{theorem}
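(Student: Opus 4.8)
The plan is to apply \cref{lema:dist} to the orbit issued from $\mathbf{p}^a=(0,-\eps,m\eps)$, which by definition of $z_d$ is the one realizing the maximal delay. First I would check that this orbit enters and then remains in the half-space $\{x>0\}$ until it leaves the tubular neighbourhood of $\mathcal{S}^r_{\eps}$: at $\mathbf{p}^a$ one has $\dot{x}=f(0)-y_0=\eps>0$, so the orbit crosses into $\{x>0\}$, and using the explicit form \eqref{eq:x0} (whose ``centre'' $\hat{x}(t)=k\eps+z(t)$ drifts away from the switching plane while the oscillatory part has amplitude of order $\eps\,\mathrm{e}^{kt/2}$) one argues that it does not return to $\{x=0\}$ before $d_R$ reaches $\delta=1$. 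With this in hand, \eqref{eq:dist_2d} holds with $(y_0,z_0)=(-\eps,m\eps)$ on the whole interval $[0,t^*]$, where $t^*$ is the exit time, i.e. $d_R(t^*,\mathbf{p}^a)=1$.

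Next I would evaluate the initial distance for this particular orbit. From the proof of \cref{lema:dist}, $d_R(0,(0,y_0,z_0))=\vartheta\sqrt{C(0)}$ with $\vartheta^2=\frac{4}{4-k^2}\bigl((y_0+\eps)^2+(z_0+k\eps)(z_0-ky_0)\bigr)$; the choice $y_0=-\eps$ cancels the first summand and leaves $\vartheta=\frac{2(m+k)}{\sqrt{4-k^2}}\eps$. Substituting into \eqref{eq:dist_2d} and imposing $d_R(t^*,\mathbf{p}^a)=1$, the factor $\sqrt{C(0)}$ cancels and one obtains
\[
1=\vartheta\,\mathrm{e}^{\frac{k}{2}t^*}\sqrt{C(t^*)},
\]
so, recalling $z_d=z(t^*)=m\eps+\eps t^*$ and taking logarithms,
\[
z_d=m\eps-\frac{2\eps}{k}\ln\!\bigl(\vartheta\sqrt{C(t^*)}\bigr).
\]

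Then I would convert this identity into the two-sided estimate \eqref{eq:zd2d}. Since $C(t)=\cos^2(\cdot)+\cos^2(\cdot)\ge 0$, the bound $\tfrac1{16}<|C(t)|<2$ of \cref{lema:dist} gives $\tfrac14<\sqrt{C(t^*)}<\sqrt2$; feeding these two extreme values into the decreasing map $s\mapsto m\eps-\frac{2\eps}{k}\ln s$ (its derivative being $-\tfrac{2\eps}{ks}<0$) and inserting the value of $\vartheta$ yields precisely the left and right inequalities of \eqref{eq:zd2d}. Finally, each side of \eqref{eq:zd2d} has the form $m\eps-\frac{2\eps}{k}\ln c-\frac{2}{k}\eps\ln\eps$ for a constant $c>0$, which tends to $0$ as $\eps\to0$ because $\eps\ln\eps\to0$; hence $z_d\to0$ by the squeeze theorem.

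The step I expect to be the main obstacle is the first one: rigorously confirming that the orbit through $\mathbf{p}^a$ remains in $\{x>0\}$ throughout $(0,t^*)$ so that \cref{lema:dist} applies on the full interval, and that $t^*$ is well defined (which needs $d_R(0,\mathbf{p}^a)<1$, valid for $\eps$ small, together with the fact that the first crossing of the level $1$ happens while the orbit is still in $\{x>0\}$). Everything downstream is bookkeeping with the explicit distance formula and the elementary limit $\eps\ln\eps\to0$.
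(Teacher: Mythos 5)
Your proposal is correct and follows essentially the same route as the paper's proof: take the orbit through $\mathbf{p}^a$, use the distance formula \eqref{eq:dist_2d} of \cref{lema:dist} with the explicit value $\vartheta=\frac{2(m+k)}{\sqrt{4-k^2}}\eps$, equate the distance to the radius $1$, bound the exit time via $\frac{1}{16}<C(t)<2$, and convert through $z_d=m\eps+\eps t_d$ before applying the squeeze argument. Your additional verification that the orbit stays in $\{x>0\}$ until the exit time is a detail the paper leaves implicit, so it only adds rigor rather than changing the argument.
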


\begin{proof}
Consider $\gamma$ to be the orbit through $\mathbf{p}^a=(0,-\epsilon,m\epsilon)$. To compute the escape point of $\gamma$ from the $\mathcal{S}_{\varepsilon}^r$ tubular neighbourhood, we equate equation \eqref{eq:dist_2d} to $1$, which is the radius of the tubular neighbourhood. Since  $\frac{1}{16}<C(t)<2$, we obtain that $\frac{1}{16}\vartheta^2\mathrm{e}^{kt_d}<1< 2\vartheta^2\mathrm{e}^{kt_d}$. Therefore, the time of flight from $\mathbf{p}^a$ to the escape point satisfies
$$
-\frac{2}{k}\ln\left(\sqrt{2}\vartheta\right) < t_d < -\frac{2}{k}\ln\left(\frac{1}{4}\vartheta\right).
$$
Since the maximal delay is given by $z_d=\epsilon t_d+m\epsilon$, expression \eqref{eq:zd2d} holds.
Moreover, by the squeeze theorem, $z_d$ tends to zero as $\varepsilon$ tends to zero. 
\end{proof}

Therefore, \cref{lema:inv.sets.2r}  implies that slow-fast PWL systems with two zones do not allow to establish a connection between canonical attracting and repelling slow manifolds, that is, between $\mathcal{S}_{\eps}^a$ and $\mathcal{S}_{\eps}^r$. Hence, these systems present a delayed loss of the stability with the maximal delay tending to zero with $\eps$, as we have proved in \cref{thm:du}. We next modify function $f$ in system~\eqref{sys:2pieces} in order to allow for the connection, which presents a behavior closer to what happens in smooth slow-fast systems.

\section{Three-regions system} \label{sec.3regimes}

In this section, we introduce a minimal PWL system also presenting a slow-passage through a Hopf-like bifurcation, but for which the maximal delay does not tend to zero with $\eps$. For this purpose, we add a small linearity zone to replace the point of non-normal hyperbolicity at the origin. Hence, we add an extra segment in the critical manifold~\eqref{eq:x-nullcina1}. This strategy of considering a 3-piece critical manifold instead of a 2-piece one has proven useful when studying fine slow-fast phenomena related to non-normal hyperbolicity, in particular to approximate a smooth slow-fast system near a quadratic fold of the critical manifold; see~\cite{desroches2016, fernandez2016canard} for more details on this approach in the context of canard-explosive systems. Given that delayed Hopf bifurcation is akin to a canard phenomenon (with different characteristics though), this further motivates us to add this extra segment to the critical manifold of system~\eqref{sys:2pieces}.
This segment defines a new central linear region with boundaries $\rho < 0 < \mu$, chosen such that the orbit following the attracting branch $\mathcal{S}_\eps^a$ at the switching plane $\{x=\rho\}$ approaches the switching plane $\{x=\mu\}$ at the same point that $\mathcal{S}_\eps^r$ does. Thus, the connection between both slow manifolds happens, hence the slow-passage behavior.

Let us consider system \eqref{sys:2pieces} where the function $f(x)$ is now given by
\begin{equation}\label{eq:f.3pw}
f(x) = \left\{
\begin{array}{cc}
-mx & x< \rho, \\
lx+n & \rho \leq x < \mu, \\
kx & x\geq\mu,
\end{array}
\right.
\end{equation}
with parameters $l$ and $n$ fixed as 
\begin{equation} \label{eq:l.n}
l =\frac{m\rho+k\mu}{\mu-\rho}, \quad
n = -\rho\mu\frac{k+m}{\mu-\rho},
\end{equation}
to ensure the continuity of the vector field. The slope $l$ is the weighted mean of the slopes $-m$ and $k$ with weights $\frac{-\rho}{\mu-\rho}$ and $\frac{\mu}{\mu-\rho}$, respectively, and then $-2<-m<l<k<2$. Notice that, since parameters $\rho <0 <\mu$ organize the boundaries between the different linear regions, in the particular case where these regions are centered, that is when $\mu=-\rho$, then $l$ is the arithmetic mean of the slopes $-m$ and $k$. 

As solutions cross the plane $\{x=\rho\}$, their local expression changes from the left-side expression~\eqref{eq:xL} to the central one \eqref{eq:xC}. Similarly, as solutions cross the plane $\{x=\mu\}$, their local expression changes from the central to the right-side expression \eqref{eq:xR}. 

As in the two regions case, slow manifolds perturb from the critical manifold when $\eps>0$ is small enough, see~\cref{lema:inv.sets.2r}. Then, the segments $\{(x,-mx-\varepsilon, x+m\varepsilon),\, x< \rho\}$ and $\{(x,kx-\varepsilon, x-k\varepsilon),\, x> \mu\}$ are part of the attracting and of the repelling  canonical slow manifolds, $\mathcal{S}_{\varepsilon}^a$ and $\mathcal{S}_{\varepsilon}^r$, respectively. These segments intersect with the planes $\{x=\rho\}$ and $\{x=\mu\}$ at points 
$\tilde{\mathbf{p}}^a=(\rho,-m\rho-\varepsilon,\rho+m\varepsilon)$ and 
$\tilde{\mathbf{p}}^r=(\mu,k\mu-\varepsilon,\mu-k\varepsilon)$, respectively.
On the other hand, the segment $\{(x,xl+n-\varepsilon, x-\varepsilon l),\, \rho < x < \mu\}$ is invariant under the flow of the system defined in the central region, and it is either part of the attracting ($l<0$) or the repelling ($l>0$) slow manifold, respectively.

To be able to connect both lateral slow manifolds through the central region, we impose that the solution in the central region~\eqref{eq:xC} passing through $\tilde{\mathbf{p}}^a$ reaches the boundary $\{x=\mu\}$ at $\tilde{\mathbf{p}}^r$. This yields
\begin{equation}\label{eq:connect_cond1}
\mathbf{u}_{C}(\hat{t};0,\tilde{\mathbf{p}}^a) = \tilde{\mathbf{p}}^r.
\end{equation}
This boundary condition provides three different equations, corresponding to the three coordinates 
of the solution in the central region \eqref{eq:xC}, with six unknowns, $(\hat{t},m, k,\rho,\mu,\varepsilon)$. Next, we solve this system of equations by obtaining the first three unknowns in terms of the others.

\begin{lemma} \label{lema:m.k}
Consider system \eqref{sys:2pieces} with $f$ given by \eqref{eq:f.3pw} and $\rho<0<\mu$. Assuming
\begin{equation} \label{eq:m}
m = \frac{\mu - \rho}{\epsilon}- k - \sqrt{\ln^2\left\lvert\frac{\rho}{\mu}\right\rvert+\pi^2}
\end{equation}
and 
\begin{equation} \label{eq:k}
k = \frac{-1}{\mu-\rho}\left(\frac{2\ln\left\lvert\frac{\rho}{\mu}\right\rvert(\rho-\mu)}{\sqrt{\ln^2\left\lvert\frac{\rho}{\mu}\right\rvert+\pi^2}} + \frac{\rho}{\epsilon}\big(\mu-\rho-\epsilon\sqrt{\ln^2\left\lvert\frac{\rho}{\mu}\right\rvert+\pi^2}\big) \right),
\end{equation}
then equation~\eqref{eq:connect_cond1} is satisfied with the time of flight of the solution in the central region being equal to 
\begin{equation} \label{eq:t}
\hat{t}= \frac{\mu-\rho}{\epsilon}-(k+m).
\end{equation}
\end{lemma}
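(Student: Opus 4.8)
The plan is to write the central-region flow $\mathbf u_C(t;0,\mathbf u_0)$ from \eqref{eq:xC} as a point drifting along the central slow manifold plus a planar oscillatory correction, exactly as was done for \eqref{eq:x0}, and then to turn the connection condition \eqref{eq:connect_cond1} into three scalar equations that I solve for $\hat t$, $m$ and $k$. Concretely, let $\mathbf c(t)=\bigl(z_0+\varepsilon t+l\varepsilon,\ l(z_0+\varepsilon t)+n+(l^2-1)\varepsilon,\ z_0+\varepsilon t\bigr)$ be the point of the central slow manifold with $z$-coordinate $z_0+\varepsilon t$; one checks directly that $\mathbf c(t)$ is a particular solution of the affine central system, so $\mathbf w(t):=\mathbf u_C(t)-\mathbf c(t)$ solves the homogeneous linear system $\dot{\mathbf w}=A_C\mathbf w$, where $A_C$ is obtained from $A_-$ by replacing $-m$ with the central slope $l$. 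Since $A_C$ has vanishing last row and $\mathbf w(0)$ has zero $z$-component, $\mathbf w$ stays in the plane $\{z=0\}$, where it evolves under the planar part of $A_C$, whose eigenvalues are $\tfrac l2\pm\xi_l\mathrm i$ with $\xi_l:=\tfrac{\sqrt{4-l^2}}2$ (here $|l|<2$ because $-m<l<k$ and $0<k,m<2$). Hence $\mathbf w(t)=\mathrm e^{lt/2}\Phi(\xi_l t)\,\mathbf w(0)$, where $\Phi(s)=\cos(s)\,\mathrm{Id}+\sin(s)\,N$ for a fixed real linear map $N$ with $N^2=-\mathrm{Id}$; the key structural fact is $\Phi(\pi)=-\mathrm{Id}$.

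Next I would evaluate the two relevant displacements. Using the continuity relations $-m\rho=l\rho+n$ and $k\mu=l\mu+n$ implied by \eqref{eq:l.n}, short computations give $\mathbf w(0)=\tilde{\mathbf p}^a-\mathbf c(0)=-(m+l)\varepsilon\,(1,l,0)^T$ and $\tilde{\mathbf p}^r-\mathbf c(\hat t)=(k-l)\varepsilon\,(1,l,0)^T$, the latter under the proviso that the $z$-coordinates coincide, i.e.\ $z_0+\varepsilon\hat t=\mu-k\varepsilon$; since $z_0=\rho+m\varepsilon$ this proviso is exactly \eqref{eq:t}. Thus \eqref{eq:connect_cond1} is equivalent to \eqref{eq:t} together with the planar equation $\mathrm e^{l\hat t/2}\Phi(\xi_l\hat t)\,\mathbf w(0)=(k-l)\varepsilon\,(1,l,0)^T$. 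Both displacements are nonzero multiples of $(1,l,0)^T$, while $\Phi(s)$ has no real eigenvector unless $\sin s=0$; this forces $\xi_l\hat t\in\pi\mathbb Z$. Moreover $m+l=\tfrac{\mu(k+m)}{\mu-\rho}>0$ and $k-l=\tfrac{-\rho(k+m)}{\mu-\rho}>0$ (both obtained from \eqref{eq:l.n}), so $\Phi(\xi_l\hat t)$ must equal $-\mathrm{Id}$, i.e.\ $\xi_l\hat t$ is an odd multiple of $\pi$. Selecting the first connection, $\xi_l\hat t=\pi$, the planar equation collapses to the single scalar identity $\mathrm e^{l\hat t/2}(m+l)=k-l$.

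It then remains to solve the system formed by $z_0+\varepsilon\hat t=\mu-k\varepsilon$, $\xi_l\hat t=\pi$ and $\mathrm e^{l\hat t/2}(m+l)=k-l$ for $(\hat t,m,k)$, keeping $l=\tfrac{m\rho+k\mu}{\mu-\rho}$. From $\tfrac{k-l}{m+l}=-\tfrac\rho\mu=\lvert\rho/\mu\rvert$ the third equation gives $l\hat t=2\ln\lvert\rho/\mu\rvert$; combining this with $\xi_l\hat t=\pi$, i.e.\ $\hat t\sqrt{4-l^2}=2\pi$, and with $4\hat t^2=(l\hat t)^2+\bigl(\hat t\sqrt{4-l^2}\bigr)^2$, yields $\hat t=\sqrt{\ln^2\lvert\rho/\mu\rvert+\pi^2}$ and $l=2\ln\lvert\rho/\mu\rvert/\hat t$. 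Substituting $l=\tfrac{m\rho+k\mu}{\mu-\rho}$ turns the last identity into the linear equation $m\rho+k\mu=\tfrac{2\ln\lvert\rho/\mu\rvert(\mu-\rho)}{\sqrt{\ln^2\lvert\rho/\mu\rvert+\pi^2}}$, while the first equation reads $k+m=\tfrac{\mu-\rho}\varepsilon-\sqrt{\ln^2\lvert\rho/\mu\rvert+\pi^2}$; solving this $2\times2$ linear system in $(m,k)$ produces \eqref{eq:k} and then \eqref{eq:m}, and substituting back into $z_0+\varepsilon\hat t=\mu-k\varepsilon$ reproduces $\hat t=\tfrac{\mu-\rho}\varepsilon-(k+m)$, which is \eqref{eq:t}.

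I expect the main obstacle to be the first step, namely extracting from \eqref{eq:xC} the precise oscillatory structure — in particular the half-period identity $\Phi(\pi)=-\mathrm{Id}$ — and verifying that both $\mathbf w(0)$ and the target displacement are parallel to $(1,l,0)^T$; this parallelism is exactly what collapses the $x$- and $y$-component equations into one scalar relation and is the conceptual core of the argument, the remainder being the bookkeeping needed to match the closed-form solution of the $2\times2$ system with \eqref{eq:m}--\eqref{eq:k}. It is also worth remarking, though not needed here, that the higher choices $\xi_l\hat t=(2j+1)\pi$ correspond to the subsequent connections, obtained by replacing $\pi^2$ with $(2j+1)^2\pi^2$ throughout; the lemma records the first one, $j=0$.
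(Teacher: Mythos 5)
Your argument is essentially the paper's, in a slightly more geometric dress: the paper plugs $\tilde{\mathbf p}^a$ into the explicit central-region solution \eqref{eq:xC}, reads the $z$-component of \eqref{eq:connect_cond1} as \eqref{eq:t}, and solves the remaining two components as a linear system in $\mathrm e^{\frac l2\hat t}\cos(\xi_l\hat t)$ and $\mathrm e^{\frac l2\hat t}\sin(\xi_l\hat t)$, whose solution $\frac\rho\mu(1,0)^T$ yields exactly your two scalar relations $l\hat t=2\ln\left\lvert\frac\rho\mu\right\rvert$ and $\xi_l\hat t=\pi$ (their choice $q=0$ corresponds to your ``first connection''). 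Your decomposition into a drift along the central invariant line plus a planar half-turn, with both displacements parallel to $(1,l,0)^T$ and $\Phi(\pi)=-\mathrm{Id}$, is a correct repackaging of that same computation, and the final algebra recovering \eqref{eq:m}--\eqref{eq:k} coincides with the paper's.

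One step is missing, however: the lemma asserts that \eqref{eq:t} is \emph{the time of flight of the solution in the central region}, so you must verify that the central-region solution through $\tilde{\mathbf p}^a$ actually remains in the strip $\rho\le x\le\mu$ for all $t\in[0,\hat t]$. If it left the strip earlier, the true orbit of the PWL system would switch to a lateral expression before time $\hat t$ and the local formula \eqref{eq:xC} you are solving would no longer describe it, so \eqref{eq:connect_cond1} would not capture the actual connection of the slow manifolds. The paper closes this by computing $x_C'(t;0,\tilde{\mathbf p}^a)=\varepsilon\bigl(1+\frac{m+l}{\xi_l}\sin(\xi_l t)\,\mathrm e^{\frac l2 t}\bigr)$ and noting it is positive on $[0,\pi/\xi_l]=[0,\hat t]$ because $m+l>0$ and $\sin(\xi_l t)\ge 0$ there; since you have already established $m+l=\frac{\mu(k+m)}{\mu-\rho}>0$, adding this monotonicity check is a one-line fix that completes your proof.
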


\begin{proof}
The time of flight~\eqref{eq:t} from $x=\rho$ to $x=\mu$ can be obtained from the third equation in \eqref{eq:connect_cond1}
(corresponding to the $z$ component), where $z(\hat{t})=m\varepsilon+\rho+\varepsilon\hat{t}=-k\varepsilon+\mu$. Moreover, the first and second equations in \eqref{eq:connect_cond1} yield a linear system in terms of $\mathrm{e}^{\frac{l}{2}\hat{t}}\cos(\xi_l\hat{t})$ and $\mathrm{e}^{\frac{l}{2}\hat{t}}\sin(\xi_l\hat{t})$, whose solution is given by 
$$
\begin{pmatrix}
\mathrm{e}^{\frac{l}{2}\hat{t}}\cos(\xi_l\hat{t}) \\
\mathrm{e}^{\frac{l}{2}\hat{t}}\sin(\xi_l\hat{t})
\end{pmatrix}
=
\frac{\rho}{\mu}
\begin{pmatrix}
1 \\ 0
\end{pmatrix}.
$$
These expressions are equivalent to
\begin{equation} \label{eq:l.t}
l = \frac{2}{\hat{t}}\ln\left\lvert\frac{\rho}{\mu} \right\rvert \quad \text{and} \quad
\frac{\sqrt{4-l^2}}{2}\hat{t} = \pi + 2\pi q,  \text{ with}\quad q\in\mathbb{Z}.
\end{equation}

Considering $q=0$ and merging the first equation in \eqref{eq:l.t} with the second one, we obtain that the latter can be written as 
$$
\hat{t}^2-\ln^2\left\lvert\frac{\rho}{\mu} \right\rvert= \pi^2.
$$
Then, replacing $\hat{t}$ in the last expression by its value given by \eqref{eq:t}, and isolating $m$ in the resulting expression, we obtain equation \eqref{eq:m}. 

Similarly, equation \eqref{eq:k} is obtained by equating the first expressions in \eqref{eq:l.n} and in \eqref{eq:l.t}, replacing therein the value of $m$ obtained in equation \eqref{eq:m}, and then isolating $k$. 

To ensure that the local solution $\mathbf{u}_C(t;0,\tilde{\mathbf{p}}^a)$ is contained in the central region, that is $\rho\leq x_C(t;0,\tilde{\mathbf{p}}^a)\leq \mu$, for all $t\in[0,\hat{t}]$, we study the sign of the derivative 
$$
x_C'(t;0,\tilde{\mathbf{p}}^a) = \varepsilon\Big( 1+\frac{m+l}{\xi_l}\sin(\xi_l t)e^{\frac{l}{2}t}\Big).
$$
Then, since $m+l>0$, this function is positive for all $\sin(\xi_l t)\geq 0$, that is for all $t\in[0,\frac{\pi}{\xi_l}]=[0,\hat{t}]$. Thus, $x_C(t;0,\tilde{\mathbf{p}}^a)$ is a monotone function in this interval with $x_C(0;0,{\tilde{\mathbf{p}}}^a)=\rho$ and $x_C(\hat{t};0,{\tilde{\mathbf{p}}}^a)=\mu$.
\end{proof}

\begin{remark}
In some contexts, it is usual to set the slope in each region rather than their boundaries, as it is done in~\cref{lema:m.k}.
In order to set the slopes, we need to study the Jacobian matrix of the function $\left(m(\rho,\mu),k(\rho,\mu)\right)$. 
Then, by the Inverse Function Theorem, we can invert the previous functions if the expression
 \[
 \frac{2\pi^2}{\frac{-\rho}{\mu}A}
 + \frac{1}{\epsilon(\frac{-\rho}{\mu}+1)^2}\Big(
 (\mu-\rho)\sqrt{A} - \epsilon A \Big),
 \]
with $A=ln^2\left\lvert\frac{\rho}{\mu}\right\rvert+\pi^2$,
is different from $0$. However, all its terms are positive except the last one. Hence, it may happen that for some values we cannot guarantee the existence of the inverse function. In the particular case where parameters satisfy 
 \[
 (\mu-\rho)\sqrt{A} > \epsilon A,
 \]
we can guarantee the existence of the inverse in a neighbourhood of $(\rho,\mu)$. In fact, if the system is centered, $\mu=-\rho$, the previous inequality becomes $\frac{2\mu}{\pi}>\epsilon$, which amounts to ensure that $m$ and $k$ are positives, see \eqref{eq:m}-\eqref{eq:k}.
\end{remark}

\cref{lema:m.k} provides conditions on the parameters $m$ and $k$ to ensure the connection between the attracting and repelling slow manifolds. These conditions are sufficient, but not necessary. Indeed, for some parameter choices, we can obtain some loops in the central region by considering other values for $q$ in \eqref{eq:l.t} (see~\cref{fig:other.connections}(a)), or cross the boundary $x=\mu$ at least one time before the connection is made (see~\cref{fig:other.connections}(b)). 
These situations provide different ways to connect the slow manifolds. Nevertheless, the full study of how to make the connection goes beyond the scope of the present work, given that our aim is to find a minimal system exhibiting any simple connection between the two slow manifolds. 

\begin{figure}[h]
    \centering
    \begin{tabular}{cc}
    \includegraphics[scale=1.]{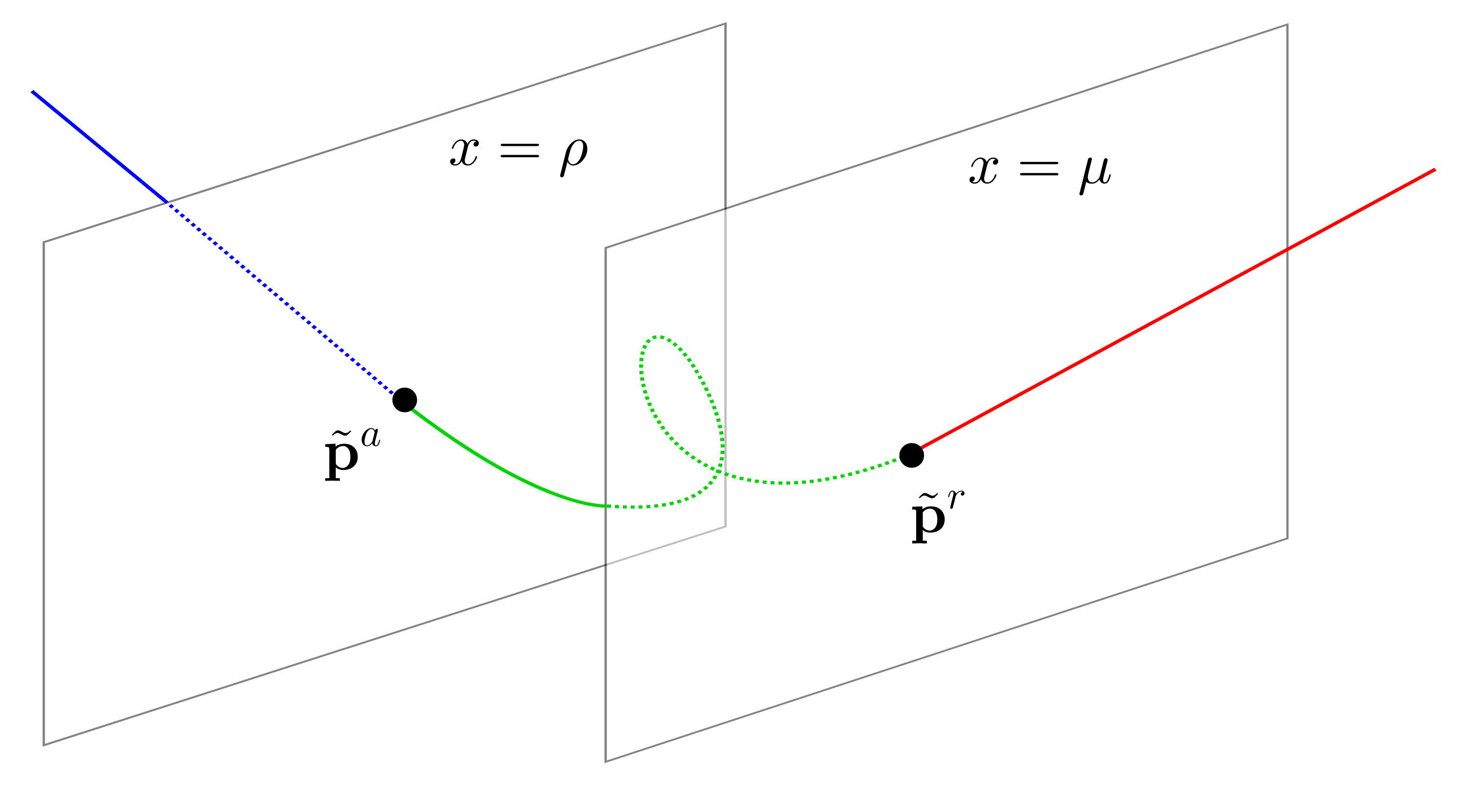} & \includegraphics[scale=1.]{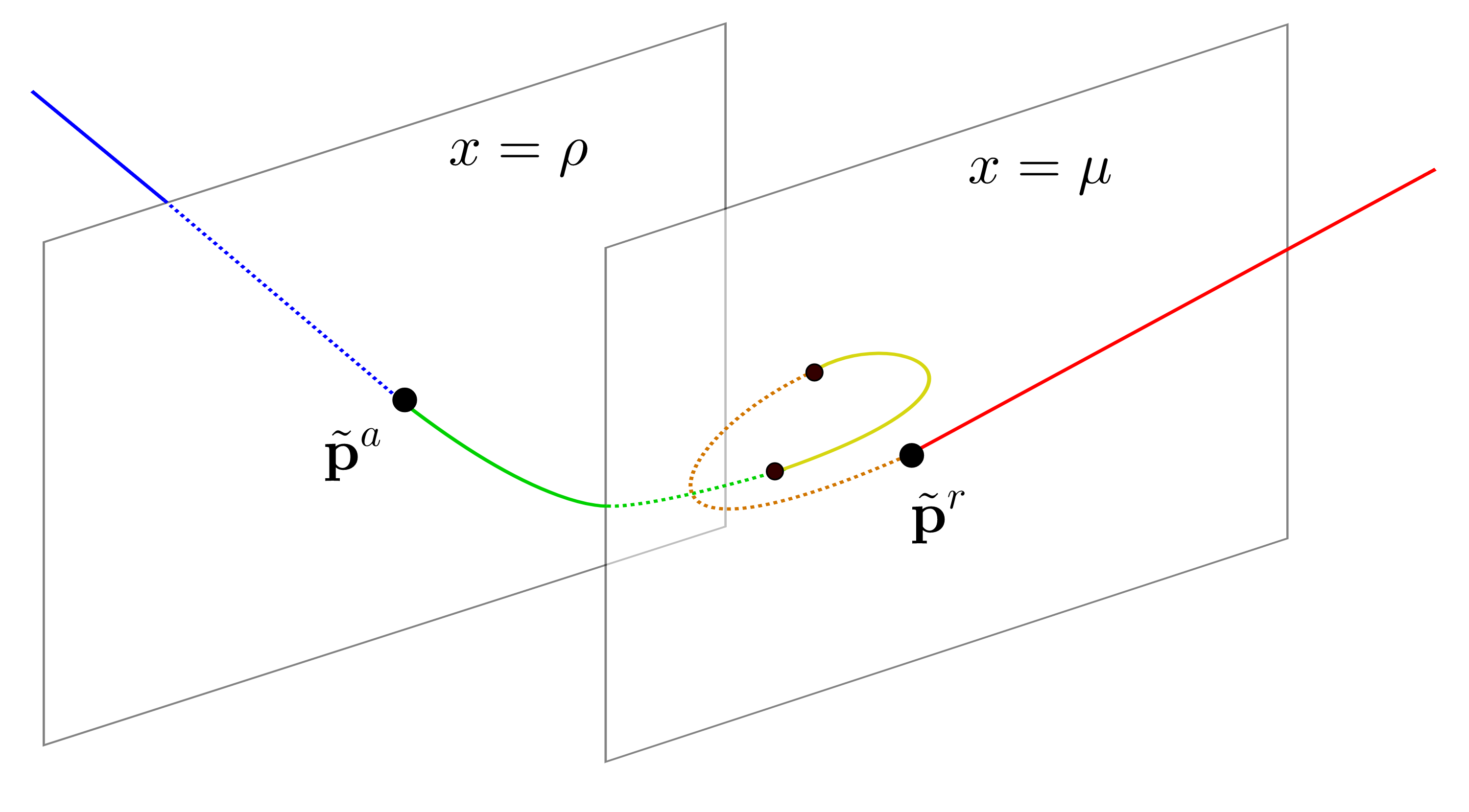} \\
    (a) & (b) 
    \end{tabular}
    \caption{\textbf{Connecting the slow manifolds.} Other possible connections that can appear between the attracting (blue line) and the repelling (red line) slow manifolds.
    The connection can be given; (a) with a loop in the central region; (b) crossing $x=\mu$ at least one time before connecting with the repelling slow manifold.}
    \label{fig:other.connections}
\end{figure}

\begin{theorem}
\label{thm:3r.simple}
Consider system \eqref{sys:2pieces} with $f$ given by \eqref{eq:f.3pw} and let $\rho<0<\mu$. Let $m$ and $k$ satisfy \eqref{eq:m} and \eqref{eq:k}, respectively. Then, the maximal delay tends to infinity as $\varepsilon$ tends to zero. Moreover, the way-in/way-out function is asymptotic to the line $z_{\mathrm{out}} = \frac{m}{k}z_{\mathrm{in}} - \frac{2\eps}{k}\ln\left\lvert\frac{\rho}{\mu}\right\rvert$.
\end{theorem}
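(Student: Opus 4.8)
The plan is to adapt the argument of \cref{thm:du}, tracking an orbit along the slow manifolds through the \emph{three} linearity zones in turn. Keeping the tubular neighbourhoods at radius $\delta=1$, I would follow an orbit that enters the tube of $\mathcal{S}_\varepsilon^a$ at a point of $z$-coordinate $z_{\mathrm{in}}$ and eventually leaves the tube of $\mathcal{S}_\varepsilon^r$ at a point of $z$-coordinate $z_{\mathrm{out}}$, splitting its journey into three legs. On the first leg, in $\{x<\rho\}$, the orbit contracts onto $\mathcal{S}_\varepsilon^a$, on which $x=z-m\varepsilon$, so it crosses $\{x=\rho\}$ at $z$-coordinate $\rho+m\varepsilon$ up to an exponentially small error; the leg lasts $t_1=\bigl((\rho+m\varepsilon)-z_{\mathrm{in}}\bigr)/\varepsilon$ and, by the $d_L$ estimate of \cref{lema:dist}, the distance to $\mathcal{S}_\varepsilon^a$ at the crossing is $\mathrm{e}^{-\frac m2 t_1}$ times a factor bounded between fixed positive constants. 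On the second leg, in $\{\rho<x<\mu\}$, I would invoke \cref{lema:m.k}: the orbit issued from $\tilde{\mathbf{p}}^a$ reaches $\tilde{\mathbf{p}}^r$ exactly, after the time $\hat t=\frac{\mu-\rho}{\varepsilon}-(k+m)=\sqrt{\ln^2|\rho/\mu|+\pi^2}$, which depends only on $\rho/\mu$ (hence is bounded uniformly in $\varepsilon$) and at which $\{x=\mu\}$ is crossed transversally; by continuity of the linear central-zone flow over the compact interval $[0,\hat t]$, a nearby orbit reaches $\{x=\mu\}$ at a point whose distance to $\mathcal{S}_\varepsilon^r$ is a bounded multiple of the previous one and whose $z$-coordinate is $\mu-k\varepsilon$ up to a comparably small error. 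On the third leg, in $\{x>\mu\}$, the orbit is repelled from $\mathcal{S}_\varepsilon^r$ and, by the $d_R$ estimate of \cref{lema:dist}, leaves its tube after a time $t_3$ with $\frac k2 t_3=\frac m2 t_1+O(1)$, hence $t_3=\frac mk t_1+O(1)$.

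With this in place, the maximal-delay statement is essentially free. The orbit lying exactly on $\mathcal{S}_\varepsilon^a$ (the limiting case $z_{\mathrm{in}}\to-\infty$) crosses $\{x=\rho\}$ precisely at $\tilde{\mathbf{p}}^a$ and, by \cref{lema:m.k}, reaches $\{x=\mu\}$ precisely at $\tilde{\mathbf{p}}^r\in\mathcal{S}_\varepsilon^r$; by invariance of $\mathcal{S}_\varepsilon^r$ it then coincides with that manifold for all later times, never leaves its tube, and so has infinite escape time. Hence the maximal delay $z_d$ is $+\infty$ for every $\varepsilon$ for which \eqref{eq:m}--\eqref{eq:k} are compatible with $0<m,k<2$; in particular it does not collapse to $0$ as $\varepsilon\to0$, in sharp contrast with \cref{thm:du}. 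Intuitively, inserting the central zone and tuning $m,k$ via \cref{lema:m.k} closes the $O(\varepsilon)$ gap between $\tilde{\mathbf{p}}^a$ and $\tilde{\mathbf{p}}^r$ that bounded the delay in the two-zone case, which is precisely why the way-in/way-out function becomes unbounded here.

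For that function, composing the three legs yields $z_{\mathrm{out}}=(\mu-k\varepsilon)+\varepsilon t_3+O(\varepsilon)=(\mu-k\varepsilon)+\frac mk\varepsilon t_1+O(\varepsilon)=(\mu-k\varepsilon)+\frac mk\bigl((\rho+m\varepsilon)-z_{\mathrm{in}}\bigr)+O(\varepsilon)$, an affine relation of slope $\frac mk$. It then remains to reduce the intercept $\mu-k\varepsilon+\frac mk(\rho+m\varepsilon)$ to $-\frac{2\varepsilon}{k}\ln|\rho/\mu|$, and this is where the connection relations are used: from \eqref{eq:t}, $\mu-\rho=\varepsilon\bigl(\sqrt{\ln^2|\rho/\mu|+\pi^2}+k+m\bigr)$, and from \eqref{eq:l.n}--\eqref{eq:l.t}, $m\rho+k\mu=l(\mu-\rho)$ with $l=\frac{2}{\sqrt{\ln^2|\rho/\mu|+\pi^2}}\ln|\rho/\mu|$; feeding \eqref{eq:m}--\eqref{eq:k} into these identities collapses the intercept, to leading order in $\varepsilon$, to the claimed value, giving the asymptotic line. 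The main obstacle I anticipate lies in the bookkeeping of the error terms: the factors $\sqrt{C(t)/C(0)}$ and $\sqrt{D(t)/D(0)}$ of \cref{lema:dist} and the linear-flow factor of the central zone are only controlled between fixed bounds and oscillate with $t_1$ and $t_3$, so one must check that, once weighted by $\varepsilon$, they stay subdominant with respect to the linear term $\frac mk z_{\mathrm{in}}$, so that the way-in/way-out function is genuinely asymptotic to — and not merely at bounded distance from — the stated line.
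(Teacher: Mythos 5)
Your three-leg decomposition, the balance of contraction at rate $\tfrac m2$ against expansion at rate $\tfrac k2$, and the infinite-delay argument via the connection of $\mathcal{S}_\varepsilon^a$ and $\mathcal{S}_\varepsilon^r$ all coincide with the paper's proof, and they correctly deliver the slope $\tfrac mk$. The genuine gap is in the intercept. In the paper, the constant $-\frac{2\varepsilon}{k}\ln\left\lvert\frac{\rho}{\mu}\right\rvert$ comes precisely from the transfer factor across the central zone: from \eqref{eq:l.t} one has $\mathrm{e}^{\frac l2\hat t}=\left\lvert\frac{\rho}{\mu}\right\rvert$, so $d_R(0)\approx d_L(0)\left\lvert\frac{\rho}{\mu}\right\rvert$, and inserting this into the balance $d_L(0)\mathrm{e}^{-\frac m2 t_i}=d_R(0)\mathrm{e}^{\frac k2 t_o}$ gives $\frac{t_o}{-t_i}\approx\frac mk+\frac{2}{k\,t_i}\ln\left\lvert\frac{\rho}{\mu}\right\rvert$, whence the stated line. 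In your leg 2 you only record that the central-zone flow multiplies the distance by ``a bounded multiple'', and in leg 3 you write $t_3=\frac mk t_1+O(1)$: the specific constant $-\ln\left\lvert\frac{\rho}{\mu}\right\rvert$ in that $O(1)$ is exactly the information the intercept requires, and your bookkeeping discards it.

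The attempted recovery of the intercept from the crossing positions $\rho+m\varepsilon$ and $\mu-k\varepsilon$ does not work, for two reasons. First, in the statement (as the paper's proof makes explicit) $z_{\mathrm{in}}$ and $z_{\mathrm{out}}$ are the offsets $z_\rho-z_i$ and $z_o-z_\mu$ measured from the points where the orbit crosses $\{x=\rho\}$ and $\{x=\mu\}$, so the terms $\mu-k\varepsilon$ and $\rho+m\varepsilon$ should never enter the relation at all. Second, the claimed algebraic collapse is false: using \eqref{eq:l.n}, \eqref{eq:l.t} and \eqref{eq:t} one gets $\mu-k\varepsilon+\frac mk(\rho+m\varepsilon)=\frac{2\varepsilon}{k}\ln\left\lvert\frac{\rho}{\mu}\right\rvert\Bigl(1+\frac{k+m}{\hat t}\Bigr)+\frac{\varepsilon}{k}(m^2-k^2)$, which differs from $-\frac{2\varepsilon}{k}\ln\left\lvert\frac{\rho}{\mu}\right\rvert$ at leading order in $\varepsilon$ (recall $\rho,\mu=O(\varepsilon)$ here, so both expressions are $O(\varepsilon)$); with the paper's own values $\rho=-0.085$, $\mu=0.15$, $\varepsilon=0.05$, $m\approx1.318$, $k\approx0.189$, your expression is $\approx 0.008$ while the theorem's intercept is $\approx 0.30$. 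To repair the proof, replace ``bounded multiple'' in leg 2 by the exact half-turn factor $\mathrm{e}^{\frac l2\hat t}=\left\lvert\frac{\rho}{\mu}\right\rvert$ (plus a perturbation controlled by the linear flow) and carry that constant through the leg-3 escape time; your closing worry about the oscillatory factors $\sqrt{C(t)/C(0)}$, $\sqrt{D(t)/D(0)}$ is legitimate but only affects the relation by bounded terms weighted by $\varepsilon$, which is the same level of approximation the paper itself accepts.
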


\begin{proof}
The assertion about the behavior of the maximal delay as $\varepsilon$ tends to zero follows immediately from~\cref{lema:m.k}, since under these conditions both slow manifolds $\mathcal{S}_{\varepsilon}^{a,r}$ connect, which implies that the orbit along the attracting slow manifold continues along the repelling slow manifold and the delay can be considered infinite.

On the other hand, the slope of the way-in/way-out function can be approximated as follows. Consider a tubular neighbourhood of radius $\delta$ around the attracting slow manifold, and a similar neighbourhood around the repelling one; without loss of generality, we can take $\delta=1$ on both sides. Let $\mathbf{p}_i=(x_i,y_i,z_i)$ be an initial condition (also called entry point) located on the left tubular neighbourhood around the attracting slow manifold, that is, at a distance $\delta$ to $\mathcal{S}_{\eps}^a$, see \eqref{eq:dist_2dD}. Let $\mathbf{p}_{\rho}=(x_{\rho},y_{\rho},z_{\rho})$ be the point where the orbit through $\mathbf{p}_{i}$ crosses the boundary $\{x=\rho\}$ and $\mathbf{p}_{\mu}=(x_{\mu},y_{\mu},z_{\mu})$ the one crossing the  $\{x=\mu\}$ boundary. Finally, consider $\mathbf{p}_o=(x_o,y_o,z_o)$ be the point of the orbit located on the right tubular neighbourhood, that is, around the repelling slow manifold (also called exit point). The slope of the way-in/way-out function is given by the ratio of the $z$-component of the entry and exit points, respectively, that is, $(z_o-z_{\mu})/(z_{\rho}-z_i)$. By definition, $z_i = \varepsilon t_i+z_{\rho}$ and $z_o = \varepsilon t_o+z_{\mu}$, where $-t_i>0$ is the time necessary to reach $\mathbf{p}_{\rho}$ from the initial condition $\mathbf{p}_{i}$ and $t_o$ is the time required to reach $\mathbf{p}_o$ starting at $\mathbf{p}_{\mu}$. Hence,
\begin{equation} \label{eq.z1_z2}
    z_o-z_{\mu} = \frac{t_o}{-t_i}(z_{\rho}-z_i).
\end{equation}
On the other hand, the contraction along the attracting slow manifold at time $t_i$ yields
$\delta=d_L(t_i)=d_L(0) \mathrm{e}^{-\frac{m}{2} t_i}$,
see equation \eqref{eq:dist_2dD}, while the expansion along the repelling slow manifold at time $t_o$ is given by $\delta=d_R(t_o) = d_R(0) \mathrm{e}^{\frac{k}{2}t_o}$,
see equation \eqref{eq:dist_2d}. Hence, combining both expressions, it follows $d_L(0) \mathrm{e}^{-\frac{m}{2} t_i}=d_R(0) \mathrm{e}^{\frac{k}{2}t_o}$. On the other hand, since $d_R(0)\approx d_L(0)e^{\frac{l}{2}\hat{t}}$, where $\frac{l\hat{t}}{2}$ can be obtained from \eqref{eq:l.t}, it follows $d_R(0)\approx d_L(0)\left\lvert\frac{\rho}{\mu}\right\rvert$. Therefore, we obtain that $ \frac {t_o}{-t_i}\approx\frac{m}{k} + \frac{2}{kt_i}\ln\left\lvert\frac{\rho}{\mu}\right\rvert$ and so, from $\eqref{eq.z1_z2}$ and $-\varepsilon t_i=z_{\rho}-z_i$ we have that
\[
z_o-z_{\mu} \approx \frac{m}{k}(z_{\rho}-z_i) - \frac{2\eps}{k}\ln\left\lvert\frac{\rho}{\mu}\right\rvert,
\]
which proves the second statement of the theorem.
\end{proof}

Given $\mu$ and $\rho$, and under assumptions of Lemma\cref{lema:m.k}, if we perturb the boundaries, then the slopes $m$, $k$ and $l$ will change. The relative position of the boundaries imposes both the sign of the slope $l$ of the critical manifold in the central region, as well as the relative size of the slopes in the other two regions, as shown in the following lemma.

\begin{lemma} \label{lema:mu_rho}
Consider the system \eqref{sys:2pieces} with $f$ given by \eqref{eq:f.3pw}. Suppose that equations \eqref{eq:m} and \eqref{eq:k} are satisfied. Then, the following statements hold:
\begin{itemize}
    \item[(a)] if $\mu<-\rho$, then $l>0$ and $k>m$,
    \item[(b)] if $\mu>-\rho$, then $l<0$ and $k<m$,
    \item[(c)] if $\mu=-\rho$, then $l=0$ and $k=m$.
\end{itemize}
\end{lemma}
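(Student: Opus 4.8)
\textbf{Proof proposal for \cref{lema:mu_rho}.}

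The plan is to work directly from the explicit formulas \eqref{eq:m} and \eqref{eq:k} for $m$ and $k$, together with the definition of $l$ in \eqref{eq:l.n}, and to track the sign of the single quantity $\ln\lvert\rho/\mu\rvert$, which is what distinguishes the three cases. Note that $\mu<-\rho$ is equivalent to $\lvert\rho/\mu\rvert>1$, i.e.\ $\ln\lvert\rho/\mu\rvert>0$; $\mu>-\rho$ gives $\ln\lvert\rho/\mu\rvert<0$; and $\mu=-\rho$ gives $\ln\lvert\rho/\mu\rvert=0$. So the whole lemma reduces to: the sign of $l$ equals the sign of $\ln\lvert\rho/\mu\rvert$, and the sign of $k-m$ also equals the sign of $\ln\lvert\rho/\mu\rvert$.

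For the statement about $l$, the cleanest route is \emph{not} to use \eqref{eq:l.n} but rather the first identity in \eqref{eq:l.t}, namely $l=\frac{2}{\hat t}\ln\lvert\rho/\mu\rvert$, established inside the proof of \cref{lema:m.k}. Since $\hat t=\pi/\xi_l>0$ (it is a time of flight, and \cref{lema:m.k} shows $x_C$ is monotone increasing on $[0,\hat t]$), the sign of $l$ is exactly the sign of $\ln\lvert\rho/\mu\rvert$. This immediately yields the $l>0$, $l<0$, $l=0$ parts of (a), (b), (c) respectively, and the case $\mu=-\rho$ is then finished because $l=0$ forces, via \eqref{eq:l.n}, $m\rho+k\mu=0$, hence $m=k$ (using $\mu=-\rho$).

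For the statement comparing $k$ and $m$, I would compute $k-m$ from \eqref{eq:m} and \eqref{eq:k}. Writing $A=\ln^2\lvert\rho/\mu\rvert+\pi^2$ and $L=\ln\lvert\rho/\mu\rvert$ for brevity, subtract \eqref{eq:m} from \eqref{eq:k}; the terms $\frac{\mu-\rho}{\epsilon}$ should cancel against part of the $\frac{\rho}{\epsilon}(\mu-\rho)$ term once one divides through by $\mu-\rho$ and uses $\rho<0<\mu$, leaving an expression whose sign is manifestly that of $L$ — the $\sqrt A$ terms combine since $m+k$ reappears, and the leftover is proportional to $L$ times a positive factor built from $\mu-\rho>0$, $\sqrt A>0$ and $\lvert\rho/\mu\rvert>0$. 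Alternatively, and perhaps more transparently, one can avoid the messy subtraction entirely: from $l=\frac{m\rho+k\mu}{\mu-\rho}$ one gets $m\rho+k\mu=l(\mu-\rho)$, i.e.\ $k\mu - m(-\rho) = l(\mu-\rho)$; combined with the sign of $l$ already determined and the relation $m+k>0$ together with the known ordering $-m<l<k$, a short case analysis on the sign of $l$ pins down the sign of $k-m$. I expect the main obstacle to be purely bookkeeping: making sure the cancellations in $k-m$ are carried out without sign errors, and being careful that $\mu-\rho>0$ is used consistently to clear denominators without flipping inequalities. Once $l$'s sign is in hand via \eqref{eq:l.t}, the rest is a routine—if slightly tedious—algebraic sign check, so I would present the $l$-argument first and then derive $k$ versus $m$ from it rather than from the raw formulas.
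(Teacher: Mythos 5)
Your recommended route is exactly the paper's proof: the sign of $l$ is read off from the first relation in \eqref{eq:l.t} (with $\hat t>0$), and then \eqref{eq:l.n} gives $m\rho+k\mu=l(\mu-\rho)$, from which $k>\frac{-\rho}{\mu}m>m$, $k<\frac{-\rho}{\mu}m<m$, or $k=m$ follow in the three cases using $\mu-\rho>0$ and $m>0$. The unexecuted direct subtraction of \eqref{eq:m} from \eqref{eq:k} is unnecessary, and your fallback argument is sound as stated, so the proposal is correct and essentially identical to the paper's.
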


\begin{proof}
Since the first equation in \eqref{eq:l.t} holds, if $\mu<-\rho$, then $l$ is positive. Consequently, from the first equation in \eqref{eq:l.n}, we have that $\mu k+\rho m>0$ and thus
$$
k > \frac{-\rho}{\mu}m > m.
$$
Following the same procedure, if we consider $\mu>-\rho$, then $l<0$, $\mu k+\rho m<0$ and
$$
k < \frac{-\rho}{\mu}m < m.
$$
Finally, in the specific case that $\mu=-\rho$, from equation \eqref{eq:l.t} we obtain that $l=0$ and thus $\mu k+\rho m=0$. Hence, $k=m$.
\end{proof}

Take the minimal PWL slow-fast system \eqref{sys:2pieces} with $f$ given by \eqref{eq:f.3pw}, and consider the fast subsystem at $\varepsilon=0$. Then, following \cite{FPT99}, the existence of a Hopf-like bifurcation (supercritical or subcritical) in the fast subsystem \eqref{eq:odeInit} can be obtained according to a magnitude ratio between the slopes of the $x$-nullcline. Our next result provides conditions to guarantee the existence, location and criticality of this bifurcation.

\begin{lemma}\label{lema:l.hopf}
Consider system \eqref{eq:odeInit} with $f$ given by \eqref{eq:f.3pw}. Then, the following assertions hold:
\begin{itemize}
    \item [(a)] If $l>0$, a Hopf-like bifurcation occurs at $z=\rho$; it is a subcritical if $l-m>0$ and supercritical otherwise.
    \item [(b)] If $l<0$, a Hopf-like bifurcation occurs at $z=\mu$; it is a subcritical if $l+k>0$ and supercritical otherwise.
\end{itemize}
\end{lemma}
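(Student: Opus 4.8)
The plan is to apply Theorem 5 of~\cite{FPT99} directly to the fast subsystem~\eqref{eq:odeInit} with the three-piece nullcline~\eqref{eq:f.3pw}, tracking the position of the equilibrium $(z,f(z))$ as $z$ crosses the relevant switching value. The key observation is that the equilibrium lies on the left branch (slope $-m<0$) when $z<\rho$, on the central branch (slope $l$) when $\rho<z<\mu$, and on the right branch (slope $k>0$) when $z>\mu$. Since $m>0$ and $k>0$ always have fixed (opposite-type) signs, the change of stability of the focus must occur at the switching value where the relevant slope changes sign, i.e. where the central slope $l$ meets one of its neighbours. If $l>0$, then the equilibrium is a stable focus precisely for $z<\rho$ (trace $-m<0$) and an unstable focus for $z>\rho$ (trace $l>0$), so the Hopf-like bifurcation is located at $z=\rho$; symmetrically, if $l<0$, the equilibrium is stable for $z<\mu$ (trace $l<0$) and unstable for $z>\mu$ (trace $k>0$), giving the bifurcation at $z=\mu$.

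For the criticality, I would invoke the explicit criterion from Theorem 5(d) in~\cite{FPT99}, which — as already used in the proof of~\cref{lema:inv.sets.2r} — compares the parameters $\gamma_L,\gamma_R$ built from the slopes on the two sides of the switching line that carries the Hopf-like bifurcation. Concretely, in case (a) the bifurcation line is $\{x=\rho\}$, with left slope $-m$ and right slope $l$; the analogue of the quantities $\gamma_L=-m/\sqrt{4-m^2}$ and $\gamma_R=l/\sqrt{4-l^2}$ controls the sign of the first Lyapunov-type coefficient, and one checks that the sign of $\gamma_L+\gamma_R$ (equivalently, since $\sqrt{4-\cdot^2}>0$, the sign of $l-m$) decides subcritical ($l-m>0$) versus supercritical ($l-m<0$). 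In case (b) the bifurcation line is $\{x=\mu\}$, with left slope $l$ and right slope $k$, so the same computation with $\gamma_L=l/\sqrt{4-l^2}$, $\gamma_R=k/\sqrt{4-k^2}$ gives the dichotomy subcritical ($l+k>0$) versus supercritical ($l+k<0$). Throughout I would use that $-2<-m<l<k<2$, established after~\eqref{eq:l.n}, so that all the square roots $\sqrt{4-(\cdot)^2}$ are real and positive and Theorem 5 of~\cite{FPT99} applies verbatim.

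The step I expect to require the most care is translating the general criticality condition of Theorem 5(d) in~\cite{FPT99} into the clean sign conditions $l\pm m>0$, $l+k>0$ stated here: one must identify which switching line plays the role of the bifurcation manifold, orient the two adjacent linear zones consistently with the convention of~\cite{FPT99}, and verify that the third (far) slope does not interfere with the local normal-form computation — it only matters for the global existence/extent of the limit-cycle branch, not for its criticality at onset. Once the correspondence of parameters is pinned down, the result is a direct citation; no new estimates are needed, only a careful bookkeeping of slopes and of the sign of the trace on either side of the relevant switching plane.
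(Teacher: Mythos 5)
Your proposal is correct and takes essentially the same route as the paper: the paper also reduces to the local two-zone system at the relevant switching plane (via an explicit translation bringing it to the form studied in \cite{FPT99}) and then applies Theorem 5 of that reference through the weighted traces $\gamma_L,\gamma_R$, with the sign of $l-m$ (resp.\ $l+k$) deciding the criticality, exactly as you argue using the monotonicity of $s\mapsto s/\sqrt{4-s^2}$. The only cosmetic difference is that the paper spells out the limit-cycle configurations on each side of the bifurcation (the case enumeration of Theorem 5) rather than quoting the criticality criterion directly, and it briefly comments on the degenerate boundary case, which your sketch does not need for the stated sign conditions.
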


\begin{proof}
Consider the 2-regions system formed by the central and left regions of \eqref{eq:odeInit}-\eqref{eq:f.3pw}. Then, the change of variables given by $x = \hat{x} + \rho$ and $y = \hat{y} - m\rho$ allows us to rewrite the system in the following form which is analyzed in  \cite{FPT99}
\begin{equation}\label{sys:aux1}
\begin{array}{l}
 \dot{\hat{x}} = \hat{f}(\hat{x})-\hat{y}, \\
 \dot{\hat{y}} = \hat{x} + a,
\end{array}
\end{equation}
with, in the present case, $a=\rho-z$, $\hat{f}(\hat{x})= -m\hat{x}$ if $\hat{x}<0$ and $\hat{f}(\hat{x})= l\hat{x}$ otherwise. Let $\text{det}_L=1$ and $t_L=-m$ be the determinant and the trace of the matrix defining the linear subsystem \eqref{sys:aux1} in $x<\rho$, and $\text{det}_C=1$ and $t_C=l$ be the determinant and the trace of the matrix in $x>\rho$. Consider also the  weighted traces
\[ 
 \gamma_{L}=\frac{t_L}{\sqrt{4\text{det}_{L}-t_L^2}}=\frac{-m}{\sqrt{4-m^2}}\quad \text{and}\quad
  \gamma_{R}=\frac{t_C}{\sqrt{4\text{det}_{C}-t_C^2}}=\frac{l}{\sqrt{4-l^2}}.
\]
Since $-2<-m<l<2$, it follows that 
\[
 D_L = \text{det}_{L} - \frac{t_L^2}{4} = \frac{4-m^2}{4} >0, \quad \text{and}\quad
 D_R = \text{det}_{C} - \frac{t_C^2}{4} = \frac{4-l^2}{4} >0. 
\] 

In the conditions of statement (a), it follows that $t_L\,t_C=-ml<0$ and $\text{sign}(\gamma_L)+\text{sign}(\gamma_R) = \text{sign}(l-m)$. According to the sign of $l-m$, we obtain different situations following Theorem 5 in \cite{FPT99}, which are explained below. 

If $l-m>0$, for $z<\rho$ we have $a>0$ and so we have an asymptotically stable equilibrium which is surrounded by a unique unstable limit cycle (case (a)). Otherwise, for $z>\rho$ we have $a<0$ and so the equilibrium becomes unstable and no limit cycles exist (case (c)). Consequently, a supercritical Hopf-like bifurcation appears at $z=\rho$.

If $l-m< 0$, for $z<\rho$ we have $a>0$ and so we have an asymptotically stable equilibrium and the system do not present limit cycles (case (b)). Otherwise, for $z>\rho$ we have $a<0$ and so the equilibrium becomes unstable and it is surrounded by a unique stable limit cycle (case (d)). Hence, a subcritical Hopf-like bifurcation appears at $z=\rho$.

Otherwise, if $l=m$, notice that in this case no criterion is established since Theorem 5 only applies when $\text{sign}(a)=\text{sign}(\gamma_C)$. In such a case, the continuity of the vector field implies that the linear systems in the regions $L$ and $C$ are identical and the minimal PWL model is equivalent to the one analyzed in~\cref{sec.2regimes}, but here with the boundary translated to $x=\mu$. This finishes the proof of statement \textit{(a)}.

If we consider the 2-regions system given by the central and right regions and proceeding in a similar way that in the proof of the statement \textit{(a)}, the proof of statement \textit{(b)} follows.
\end{proof}

Let us consider the specific case $\mu=-\rho$, where the central region is centered at the origin. Under this new assumption,~\cref{lema:mu_rho} implies that $l=0$ and $m=k$. Hence, from equations \eqref{eq:m}-\eqref{eq:k}, we obtain that
$k=m=\frac{\mu}{\epsilon}-\frac{\pi}{2}$. In particular, to ensure that  $0<k=m<2$, we need to consider that 
$\frac{\pi}2\varepsilon < \mu < \frac {\pi+4}2 \varepsilon$,  which forces the width of the central region to be of order $\varepsilon$. Notice that, for $l=0$,~\cref{lema:l.hopf} does not ensure the existence of the Hopf-like bifurcation. Nonetheless, given $\varepsilon$ and $\mu=-\rho$ lying in the specified interval, there is a slight perturbation of $\mu$ and $\rho$ such that $m$ and $k$ remain in the interval $(0,2)$. Hence, by~\cref{lema:mu_rho}, if $\mu<-\rho$, then $k>m$, and if $\mu>-\rho$, then $k<m$. Therefore, by~\cref{lema:l.hopf}, a Hopf-like bifurcation appears.  
 
This fact is illustrated in~\cref{fig:SP3d_3R}. Panel (a) shows the case $\mu=-\rho=0.15$ and $\eps=0.05$, where no Hopf-like bifurcation exists, as can be concluded from the exponential growth of the amplitude of the oscillations. However, if we slightly perturb $\rho$, taking for instance $\rho=-0.085$ (see Panel (b)), we obtain $k<m$ and a supercritical Hopf-like bifurcation appears.

\begin{figure}[h]
    \centering
    \begin{tabular}{cc}
    \includegraphics[scale=0.28]{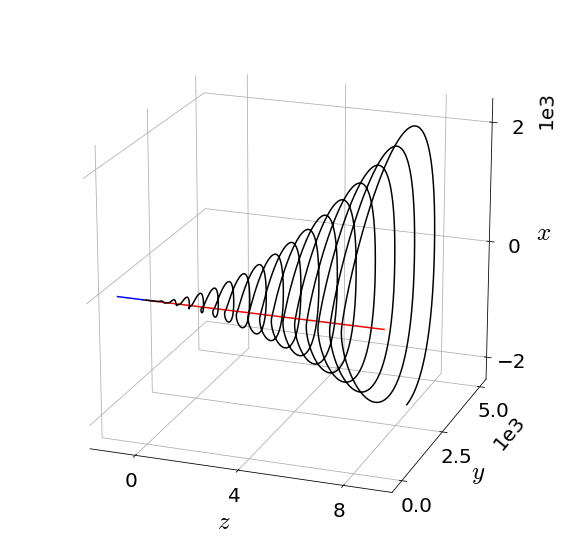}  & 
    \includegraphics[scale=0.28]{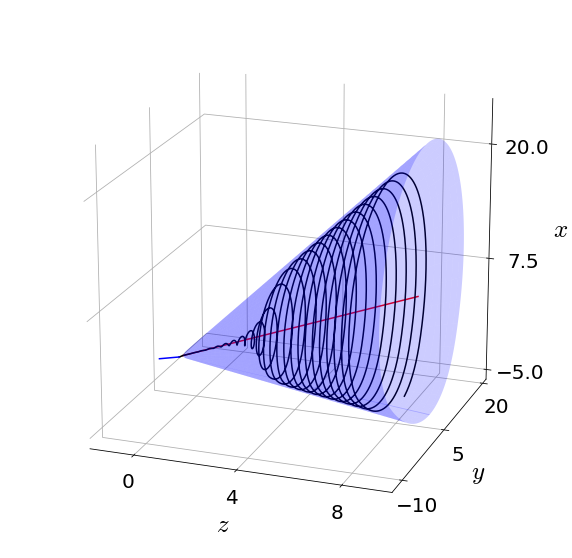} \\
    (a) & (b)
    \end{tabular}
    
    \caption{\textbf{Slow passage phenomenon through a Hopf-like bifurcation}, with initial conditions near the attracting slow manifold (blue line). The repelling slow manifold is depicted in red. The parameters are $\mu=0.15$, $\epsilon=0.05$ and (a) $\rho=-0.15$, in this case, no Hopf bifurcation occurs; (b) $\rho=-0.085$, in this case a supercritical Hopf-like bifurcation takes place defining an attracting manifold, in blue in the figure. The values of $m$ and $k$ are given by expressions \eqref{eq:m} and \eqref{eq:k}, respectively.}
    \label{fig:SP3d_3R}
\end{figure}

In~\cref{fig:in-out} we plot the way-in/way-out function for $\mu=0.15$ and $\rho=-0.085$; here, $k$ and $m$ are fixed so that the relations in~\cref{lema:m.k} are satisfied and guarantee the existence of a connection between the lateral slow manifolds. In the figure, we can see that the $z$ coordinate of the escape point from the tubular neighbourhood of the repelling slow manifold is proportional to the $z$ coordinate of the entry point to the tubular neighbourhood of the attracting slow manifold. Due to the connection between both slow manifolds, this relation is expected to persist along the whole domain of the function, as shown in~\cref{thm:3r.simple}. Nevertheless, in the simulations, this relation is not satisfied at all and the way-in/way-out function remains constant to a specific value. This unexpected behavior is due to the numerical precision in the simulations, which does not provide an exact connection. Therefore the plateau do not correspond to a buffer point (see~\cref{fig:in-out}).

\begin{figure}[h]
    \centering
    \includegraphics[scale=0.4]{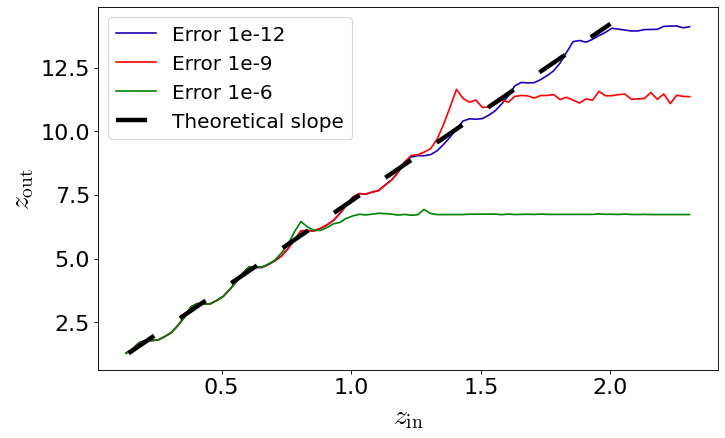}
    \caption{\textbf{Way-in/way-out function for a delayed supercritical Hopf-like bifurcation.} Plot of the way-in/way-out function in the case $\rho = -0.085$, $\mu = 0.15$, $\varepsilon=0.05$, for the parameters $k\approx 0.189$, $m\approx 1.318$ and $l\approx -0.356$ computed according to equations \eqref{eq:m}, \eqref{eq:k} and \eqref{eq:l.n}. The way-in/way-out function has been calculated for different numerical precision. The dashed curve corresponds to the asymptotic line $z_{\mathrm{out}}=\frac m k z_{\mathrm{in}}-\frac {2\varepsilon}{k}\ln{\left\lvert\frac{\rho}{\mu}\right\rvert}$ according to~\cref{thm:3r.simple}.}
    \label{fig:in-out}
\end{figure}

To support this claim, we study the relationship between computational accuracy and the asymptotic value of the graph at which it saturates.  \cref{tab:precision} relates the error in the connection, $\vartheta$, needed to obtain the observed saturation point in the way-in/way-out function, with the computational error of the numerical integrator. Using~\cref{lema:dist} and equation \eqref{eq:dist_2d}, we can bound $\vartheta$ by the extreme values of $C(t)$, that is,
$$
\sqrt{\frac{1}{2\mathrm{e}^{kt}}} < \vartheta < \sqrt{\frac{16}{\mathrm{e}^{kt}}}.
$$
Hence, using the flying time from $x=\mu$ to the escape time, we obtain the minimum and maximum values of $\vartheta$ given in~\cref{tab:precision}. As we can see, the computational precision has approximately the same order as $\vartheta$. Therefore, the saturation value that can be seen in~\cref{fig:in-out} agrees with the computational precision.

\begin{table}[h]
    \centering
    \begin{tabular}{|c|c|c|}
    \hline
    Computational Precision & min $\vartheta$ & max $\vartheta$  \\ \hline \hline 
    $10^{-12}$ & $2.78\cdot10^{-12}$ & $1.57\cdot10^{-11}$ \\ 
    $10^{-9}$ & $1.43\cdot 10^{-10}$ & $8.09\cdot 10^{-10}$ \\
    $10^{-6}$ & $1.24\cdot 10^{-6}$ & 
    $7.03\cdot10^{-6}$\\
    \hline
    \end{tabular}
    \caption{\textbf{Comparison between round-off error and error at the connection.} Comparison between the precision of the numerical integrator and the value of $\vartheta$, which provides an approximate value of the initial distance between the attracting and the repelling slow manifolds on the plane $\{x=\mu\}$. To compute the second and the third column, we use the way-in/way-out function evaluated on the attracting slow manifold at $x=\rho$, with a tubular neighbourhood of radius $1$ and the same parameters as in~\cref{fig:in-out}(a).}
    \label{tab:precision}
\end{table}

\subsection{Adding a buffer point}\label{sec:buff.point}

As described in~\cref{thm:3r.simple}, the connection between the slow manifolds yields a maximal delay that is not finite, which implies that the way-in/way-out function is unbounded. However, the presence of an equilibrium point on the repelling slow manifold can modify this behavior. In fact, such an equilibrium blocks the flow not only on the repelling slow manifold that connects to it, but also for orbits sufficiently close to this slow manifold. Hence, it forces the $z$-coordinate of the exit point of each such orbits to remain constant (see \cite{diener1991maximal,neishtadt2009stability}). This constant value is called a \textit{buffer point.} 

In order to analyze the slow passage through a Hopf-like bifurcation in the presence of a buffer point, we extend the previous PWL slow-fast system and now consider
\begin{equation} \label{eq:odeBuffer}
\begin{array}{ccl}
\dot{x} & = & f(x)-y, \\
\dot{y} & = & x-z, \\
\dot{z} & = & \epsilon(a-x),
\end{array}
\end{equation}
where $f(x)$ is still taken as in \eqref{eq:f.3pw}. 

We start by establishing conditions on the parameters in order to guarantee a connection between the attracting and the repelling slow manifolds. As the expressions of the eigenvalues become more complicated, we now assume that the matrix of the system in each region has its spectrum formed by one real eigenvalue $\lambda_j$ and a pair of complex conjugated eigenvalues $\alpha_j\pm \beta_j \, \rm{i}$, with $j\in\{L,C,R\}$; the subscripts $L,C$ and $R$ stand for the left half-space $\{x<\rho\}$, the central strip $\{\rho<x<\mu\}$ and the right half-space $\{x>\mu\}$, respectively. To simplify the notations, when $\lambda_j\alpha_j<0$ we refer to this configuration as of \textit{saddle-focus type}, when $\lambda_j\alpha_j>0$ we refer to it as of \textit{node-focus type} and when $\alpha_j=0$ we refer to it as of \textit{saddle-center type}. In particular, we consider a configuration of  saddle-center type in the central region, that is, $\alpha_C=0$. Therefore, the complex eigenvalues of the matrix of the linear system defined in the central strip
\begin{equation}\label{eq:centralsys.bfpoint}
\mathbf{\dot{u}} = 
\begin{pmatrix}
l & -1 & 0 \\
1 & 0 & -1 \\
-\epsilon & 0 & 0
\end{pmatrix}\mathbf{u}+
\begin{pmatrix}
n \\ 0 \\ \epsilon a
\end{pmatrix},
\end{equation}
are purely imaginary. We also assume the size of this central region to be order $O(\varepsilon)$. Following \cite{ProhensVich}, the attracting and repelling slow manifolds are given by the line segments each formed by the  equilibrium point and eigenvector associated with the slow eigenvalue, contained respectively in L or R, i.e.

\begin{equation}\label{def:slowman_bfp}
\begin{array}{l}
 \mathcal{S}_{\eps}^a = \Big\{(a,-ma,a)+r_L(1,-m-\lambda_L,-\eps/\lambda_L);\;r_L\leq \rho-a\Big\}, \\
 \mathcal{S}_{\eps}^r = \Big\{(a,ka,a)+r_R(1,k-\lambda_R,-\eps/\lambda_R);\;r_R\geq \mu-a\Big\},
 \end{array}
\end{equation}
respectively. In the next lemma, we establish conditions on the parameters to guarantee the connection between both slow manifolds. These conditions are given by power series of $\varepsilon$ obtained using the method of the undetermined coefficients.

\begin{lemma}\label{lema:m.k.bfpoint}
Consider system \eqref{eq:odeBuffer}, with $f$ given by \eqref{eq:f.3pw}, and $\varepsilon>0$ small enough. Assume that the behavior in the left half space is of saddle-focus type with $\lambda_L\alpha_L<0$, in the right half space is of node-focus type with $\lambda_R\alpha_R>0$ and the behavior in the central strip is of saddle-center type, i.e. $\alpha_C=0$. Setting $\rho=-k\varepsilon -\varepsilon^2$ and $\mu=m\varepsilon-\varepsilon^2$, there exist positive functions $m(a,\varepsilon)$ and $k(a,\varepsilon)$  such that, when $m=m(a,\varepsilon)$ and $k=k(a,\varepsilon)$, the slow manifolds $\mathcal{S}_{\eps}^a$ and $\mathcal{S}_{\eps}^r$ connect. Moreover, $m(a,\varepsilon)$ and $k(a,\varepsilon)$ can be expressed in series expansion of $\varepsilon$ as
\begin{align*}
m(a,\varepsilon) &= \frac {a\pi}{2(1-a)} +O(\varepsilon),\\
k(a,\varepsilon) &= \frac {a\pi}{2(1-a)} +O(\varepsilon). 
\end{align*}
\end{lemma}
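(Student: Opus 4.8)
The plan is to impose the connection condition directly on the explicit solutions of the linear system in the central strip, exactly as in the proof of \cref{lema:m.k}, and then to solve the resulting transcendental system for $m$ and $k$ by a perturbative argument in $\varepsilon$. First I would write down the flow $\mathbf{u}_C(t;0,\tilde{\mathbf{p}}^a)$ of the central subsystem \eqref{eq:centralsys.bfpoint}, using the assumed spectral decomposition $\{\lambda_C,\pm\beta_C\mathrm{i}\}$ with $\alpha_C=0$; the purely imaginary pair gives a purely oscillatory (non-growing) component and the real eigenvalue $\lambda_C$ a slow drift, so the entries of $\mathbf{u}_C$ are affine combinations of $1$, $\mathrm{e}^{\lambda_C t}$, $\cos(\beta_C t)$ and $\sin(\beta_C t)$ with coefficients depending on $(l,n,a,\varepsilon)$ and on the entry point $\tilde{\mathbf p}^a$. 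The slow manifolds are now the one-dimensional affine sets \eqref{def:slowman_bfp}, with their respective endpoints on $\{x=\rho\}$ and $\{x=\mu\}$; call these endpoints $\tilde{\mathbf p}^a$ and $\tilde{\mathbf p}^r$ as before. The connection condition is again
\begin{equation*}
\mathbf{u}_C(\hat t;0,\tilde{\mathbf p}^a) = \tilde{\mathbf p}^r ,
\end{equation*}
three scalar equations in the unknowns $(\hat t, m, k)$ once $\rho=-k\varepsilon-\varepsilon^2$, $\mu=m\varepsilon-\varepsilon^2$, $a$ and $\varepsilon$ are fixed (recall $l$ and $n$ are determined by $m,k,\rho,\mu$ through \eqref{eq:l.n}, and $\lambda_C,\beta_C$ by $l$ and $\varepsilon$).

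Next I would extract, as in \eqref{eq:l.t}, the structural consequences of this system. The $z$-component equation fixes the time of flight $\hat t$ as an $O(1/\varepsilon)$ quantity (since $\mu-\rho=(m+k)\varepsilon$, one gets $\hat t$ of order $m+k$ divided by something of order $\varepsilon$ in $\dot z = \varepsilon(a-x)$ — here the $x$-drift inside the $\dot z$ equation must be handled carefully, which is exactly why series in $\varepsilon$ are needed rather than a closed form). The two remaining equations, after eliminating the exponential and trigonometric factors, force a quantization condition on $\hat t$ — roughly $\beta_C\hat t \equiv \pi \pmod{2\pi}$, reflecting the half-turn of the focus in the central region — together with a magnitude balance relating $\mathrm{e}^{\lambda_C\hat t}$ (or the analogous real-eigenvalue contraction/expansion factor) to the ratio of the "radial" coordinates $r_L,r_R$ of the endpoints. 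Imposing $\rho=-k\varepsilon-\varepsilon^2$, $\mu=m\varepsilon-\varepsilon^2$ turns all of these into relations of the form $\Phi(m,k,a,\varepsilon)=0$, $\Psi(m,k,a,\varepsilon)=0$ that are analytic in $\varepsilon$ near $\varepsilon=0$.

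Then I would solve this system by the method of undetermined coefficients: posit $m=m_0+m_1\varepsilon+m_2\varepsilon^2+\cdots$ and $k=k_0+k_1\varepsilon+\cdots$, substitute into $\Phi=\Psi=0$, and match powers of $\varepsilon$. At leading order the quantization condition becomes $\beta_C\hat t\to\pi$ while $\hat t\sim (m_0+k_0)/(1-a)$ up to the $O(1)$ part of the $z$-drift, and $\beta_C\to 1$ as $l\to 0$ (which happens because $l$ is a weighted mean of $-m$ and $k$ with weights of order one, and in the scaled geometry $\rho\approx-k\varepsilon$, $\mu\approx m\varepsilon$ one finds $l\to 0$); together with the magnitude balance this pins down $m_0=k_0=\dfrac{a\pi}{2(1-a)}$, which is the claimed leading term. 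The positivity of $m(a,\varepsilon)$ and $k(a,\varepsilon)$ for small $\varepsilon$ follows from positivity of this leading coefficient (for the relevant range of $a$) and continuity. Finally, to promote the formal series to an actual solution I would invoke the analytic implicit function theorem: one checks that the Jacobian $\partial(\Phi,\Psi)/\partial(m,k)$ at $\varepsilon=0$, $m=k=m_0$ is invertible — this is the analogue of the nondegeneracy discussed in the Remark after \cref{lema:m.k} — which gives analytic functions $m(a,\varepsilon)$, $k(a,\varepsilon)$ with the prescribed expansion, and simultaneously justifies that the formal coefficients computed by matching are the Taylor coefficients of the true solution.

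The main obstacle I expect is bookkeeping the $\varepsilon$-dependence coming through three coupled channels at once: the slopes $l,n$ depend on $m,k$ and on $\rho=-k\varepsilon-\varepsilon^2$, $\mu=m\varepsilon-\varepsilon^2$; the eigenvalues $\lambda_C,\beta_C$ (and $\lambda_L,\lambda_R$, which enter the endpoints through \eqref{def:slowman_bfp}) depend on $l$ and hence again on $m,k,\varepsilon$; and the time of flight $\hat t$ is itself $O(1/\varepsilon)$, so that a product like $\lambda_C\hat t$ or $\beta_C\hat t$ is $O(1)$ and its expansion mixes several orders of the constituent series. Keeping the quantization condition $\beta_C\hat t=\pi+2\pi q$ at the right branch $q=0$ and verifying that the monotonicity of $x_C(t)$ along the connecting arc still holds (so the orbit genuinely stays in the central strip, as in the last paragraph of the proof of \cref{lema:m.k}) are the places where care is needed; everything else is a routine, if lengthy, power-series computation and one application of the analytic implicit function theorem.
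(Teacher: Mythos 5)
Your overall strategy is essentially the paper's: impose the connection condition $\mathbf{u}_C(\hat t;0,\tilde{\mathbf p}^a)=\tilde{\mathbf p}^r$ with endpoints taken from \eqref{def:slowman_bfp}, expand $m$, $k$ (and the flight time) in powers of $\varepsilon$ by undetermined coefficients, and promote the formal series to a true solution via the analytic Implicit Function Theorem. The paper keeps all three scalar equations, divides each one by $\varepsilon$ to desingularize, checks that $(m,k,\tau)=\bigl(\tfrac{a\pi}{2(1-a)},\tfrac{a\pi}{2(1-a)},\pi\bigr)$ solves the divided system at $\varepsilon=0$, and verifies that the $3\times3$ Jacobian with respect to $(m,k,\tau)$ has determinant $-a^2\pi(a-1)\neq 0$. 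Your variant (eliminate $\hat t$ through the $z$-equation, then apply the IFT to a $2\times2$ system in $(m,k)$) is workable, but only if you carry out the same division by $\varepsilon$: the raw connection equations degenerate at $\varepsilon=0$, so without this desingularization the Jacobian you propose to check is singular and the IFT does not apply. Make that step explicit.

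Two of your intermediate quantitative claims are wrong and, followed literally, would not produce the stated leading coefficient. First, the flight time across the central strip is not $O(1/\varepsilon)$: the saddle-center hypothesis forces $l=\lambda_C=-\varepsilon$ and $\beta_C=1$ exactly (and the prescribed $\rho=-k\varepsilon-\varepsilon^2$, $\mu=m\varepsilon-\varepsilon^2$ are precisely what makes this compatible with \eqref{eq:l.n}), so the connection is a half-turn of the near-center and $\hat t\to\pi$ as $\varepsilon\to 0$; this is the value $\tau=\pi$ in the paper's solution vector, and it also contradicts your own later use of $\hat t$ as an $O(1)$ quantity. Second, your balance $\hat t\sim(m_0+k_0)/(1-a)$ is incorrect. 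The $z$-coordinates of the endpoints are $a-(\rho-a)\varepsilon/\lambda_L$ and $a-(\mu-a)\varepsilon/\lambda_R$, and since the slow lateral eigenvalues satisfy $\lambda_{L,R}=-\varepsilon+O(\varepsilon^2)$, one has $\varepsilon/\lambda_{L,R}=-1+O(\varepsilon)$; hence the required increment of $z$ is $(m+k)(1-a)\varepsilon+O(\varepsilon^2)$, while $\dot z=\varepsilon(a-x)\approx\varepsilon a$ in the strip. The correct leading-order balance is therefore $\hat t\approx(m_0+k_0)(1-a)/a=\pi$, which is exactly what yields $m_0=k_0=\tfrac{a\pi}{2(1-a)}$. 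Your sketch (which gives $m_0+k_0=\pi(1-a)$, or $a\pi$ if the endpoint corrections through $\varepsilon/\lambda_{L,R}$ are dropped) is inconsistent with the answer you quote, so the leading-order matching must be redone keeping those $O(\varepsilon)$ endpoint corrections; once that is fixed, the rest of your plan (positivity from the leading coefficient, nondegenerate Jacobian, analyticity in $\varepsilon$) goes through as in the paper.
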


\begin{proof}
We first ensure that all the conditions stated in the hypotheses are compatible, in particular, with the continuity of the system, i.e., that relation \eqref{eq:l.n} is satisfied. 

The characteristic polynomial of the system \eqref{eq:centralsys.bfpoint} is $p_C(\lambda)=-\lambda^3+l\lambda^2-\lambda-\epsilon$. Since we assume a configuration of saddle-center type, i.e. $p_C(\lambda)= -(\lambda-\lambda_C)(\lambda^2+\beta_C^2),
$ by equalling coefficients, we get $l=\lambda_C=-\epsilon$ and $\beta_C=1$. Hence equation  \eqref{eq:l.n} writes 
\begin{equation}\label{eq:l.-eps}
-\epsilon = \frac{m\rho+k\mu}{\mu-\rho},
\end{equation}
and from the expressions of $\mu$ and $\rho$ given in the statement of the lemma, we conclude that equation \eqref{eq:l.n} is satisfied.  

Proceeding similarly with the lateral systems, we can relate the eigenvalues in terms of the parameters by equalling the coefficients of the characteristic polynomial.  From this, we get the following sets of relations
$$
\left\{
\begin{array}{ccl}
-m & = & 2\alpha_L+\lambda_L, \\
1 & = & 2\alpha_L\lambda_L+(\alpha_L^2+\beta_L^2), \\
-\epsilon & = & \lambda_L(\alpha_L^2+\beta_L^2),
\end{array}
\right.
\quad \text{and}\quad
\left\{
\begin{array}{ccl}
k & = & 2\alpha_R+\lambda_R, \\
1 & = & 2\alpha_R\lambda_R+(\alpha_R^2+\beta_R^2), \\
-\epsilon & = & \lambda_R(\alpha_R^2+\beta_R^2).
\end{array}
\right.
$$
We note that these conditions are compatible with $m>0$,  $\alpha_L<0$, $\lambda_L<0$, and $k>0$, $\alpha_R>0$ and $\lambda_R<0$.

From \eqref{def:slowman_bfp}, the intersection points $\mathbf{p}_{\varepsilon}^a$ and $\mathbf{p}_{\varepsilon}^r$, of the attracting and repelling slow manifold with the planes $\{x=\rho\}$ and $\{x=\mu\}$ are 
\begin{equation}
    \begin{array}{l}
    \mathbf{p}_{\varepsilon}^a=\left(\rho,-ma-(\rho-a)(m+\lambda_L),a-(\rho-a)\frac{\epsilon}{\lambda_L}\right)\\ \\
    \mathbf{p}_{\varepsilon}^r=\left(\mu,ka + (\mu-a)(k-\lambda_R),a - (\mu-a)\frac{\epsilon}{\lambda_R}\right),\\    
   \end{array}
\end{equation}
respectively. Therefore, the condition for the connection between both slow manifolds can be written as $\mathbf{u}_C(\tau;0,\mathbf{p}_{\varepsilon}^a)=\mathbf{p}_{\varepsilon}^r$, where $\mathbf{u}_C$ is given in \eqref{eq:uC}. Expressing the solution $\mathbf{u}_C$ in terms of the coordinates, the connection condition becomes in a system of three equations and five unknowns $(a,m,k,\tau,\varepsilon)$,
\begin{equation} \label{eq:5eq}
\begin{array}{rcl}
x_C(\tau;0,\mathbf{p}_{\varepsilon}^a) & = & \mu, \\
y_C(\tau;0,\mathbf{p}_{\varepsilon}^a) & = & ka + (\mu-a)(k-\lambda_R), \\ 
z_C(\tau;0,\mathbf{p}_{\varepsilon}^a) & = & a - (\mu-a)\frac{\epsilon}{\lambda_R},
\end{array}
\end{equation}
which is equivalent to the system obtained by dividing by $\varepsilon$ every equation,
\begin{equation} \label{eq:3eq}
\begin{array}{rcl}
\frac{1}{\varepsilon}(x_C(\tau;0,\mathbf{p}_{\varepsilon}^a)-\mu) & = & 0, \\
\frac{1}{\varepsilon}(y_C(\tau;0,\mathbf{p}_{\varepsilon}^a) - ka - (\mu-a)(k-\lambda_R)) & = & 0, \\ 
\frac{1}{\varepsilon}(z_C(\tau;0,\mathbf{p}_{\varepsilon}^a) - a + (\mu-a)\frac{\epsilon}{\lambda_R}) & = & 0.
\end{array}
\end{equation}

Following the expression of $x_C,y_C$ and $z_C$ in \eqref{eq:uC} it can be concluded that for every $a\not \in \{0,1\}$, the vector $\mathbf{s}=(a, \frac{a\pi}{2(1-a)}, \frac{a\pi}{2(1-a)}, \pi, 0)$ is a solution of system \eqref{eq:3eq}. Moreover, computing the derivative of the system with respect to the variables $m,k,$ and $\tau$ evaluated at the previous solution we obtain that 

$$
D_{m,k,\tau} (\mathbf{s})=
\begin{pmatrix}
2a-1 & 1 & a \\
 0 & 0 & a\frac{a\pi}{2(a-1)} \\
 a-1 & a-1 & a
\end{pmatrix},
$$ \sloppy
having determinant $\det(D_{m,k,\tau} (\mathbf{s})) = -a^2\pi(a-1)\neq 0$. From the Implicit Function Theorem we conclude the existence of functions $m(a,\varepsilon)$, $k(a,\varepsilon)$ and $\tau(a,\varepsilon)$, analytic as functions of $\varepsilon$, and such that the parameters $(a,m(a,\varepsilon),k(a,\varepsilon),\tau(a,\varepsilon),\varepsilon)$ are solutions of the system \eqref{eq:3eq}.
\end{proof}

\begin{theorem}\label{th.delay.buf.point}
Under the conditions to allow the connection of the slow manifolds given in~\cref{lema:m.k.bfpoint}, the maximal delay for system \eqref{eq:odeBuffer} is attained at the buffer point, i.e.  $z_d=a-\mu$.
\end{theorem}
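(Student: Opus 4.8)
The plan is to identify the unique equilibrium of \eqref{eq:odeBuffer}, show that it lies on the repelling slow manifold $\mathcal{S}_\eps^r$, that it is a terminal point for the reduced (slow) flow on $\mathcal{S}_\eps^r$ while staying repelling transverse to it, and to deduce that its $z$-coordinate, measured from the Hopf-like value, is the maximal delay. First I would locate the equilibrium: setting $f(x)-y=0$, $x-z=0$, $a-x=0$ yields $x=z=a$ and $y=f(a)$. Under the hypotheses of \cref{lema:m.k.bfpoint} one has $m(a,\eps)=\frac{a\pi}{2(1-a)}+O(\eps)>0$, which forces $0<a<1$, while $\mu=m\eps-\eps^2=O(\eps)$; hence for $\eps$ small enough $a>\mu>\rho$, so $x=a$ falls in the right linearity zone, $f(a)=ka$, and $\mathbf{e}=(a,ka,a)$ is the unique equilibrium (there is none in the left or central zones since $x=a>\mu>\rho$ there). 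Comparing with \eqref{def:slowman_bfp}, $\mathcal{S}_\eps^r$ is the ray based at $(a,ka,a)$ in the slow eigendirection $(1,k-\lambda_R,-\eps/\lambda_R)$ with parameter $r_R\geq\mu-a$; since $\mu-a<0$, the value $r_R=0$ is admissible and returns $\mathbf{e}$, so $\mathbf{e}$ is an interior point of $\mathcal{S}_\eps^r$.

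Next I would analyse the dynamics on $\mathcal{S}_\eps^r$. Writing $\mathbf{u}=\mathbf{e}+r_R(1,k-\lambda_R,-\eps/\lambda_R)$ and using that $\mathbf{e}$ is the equilibrium and $(1,k-\lambda_R,-\eps/\lambda_R)$ the eigenvector of $\lambda_R$, the flow restricted to $\mathcal{S}_\eps^r$ reduces to $\dot r_R=\lambda_R r_R$. The relation $-\eps=\lambda_R(\alpha_R^2+\beta_R^2)$ established in the proof of \cref{lema:m.k.bfpoint} gives $\lambda_R<0$, while $\alpha_R>0$ because $\mathcal{S}_\eps^r$ is repelling in the fast directions. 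Thus $\mathbf{e}$ is a hyperbolic equilibrium of the full flow of saddle-focus character: a one-dimensional stable direction tangent to $\mathcal{S}_\eps^r$ and a two-dimensional unstable focus transverse to it. In particular the reduced flow carries every point of the segment of $\mathcal{S}_\eps^r$ joining $\mathbf{p}_\eps^r$ (where $x=\mu<a$, i.e. $r_R=\mu-a$) to $\mathbf{e}$ (where $r_R=0$) toward $\mathbf{e}$, reaching it only asymptotically; $\mathbf{e}$ therefore blocks the slow passage.

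I would then read off the way-in/way-out function. An orbit that enters the tubular neighbourhood of $\mathcal{S}_\eps^a$, shadows it, crosses the central region and — by the connection of \cref{lema:m.k.bfpoint} — lands in the tubular neighbourhood of $\mathcal{S}_\eps^r$ near $\mathbf{p}_\eps^r$, is transported along $\mathcal{S}_\eps^r$ toward $\mathbf{e}$ and is eventually ejected by the normal expansion ($\alpha_R>0$), so its exit point lies close to $\mathbf{e}$. Along the whole segment from $\mathbf{p}_\eps^r$ to $\mathbf{e}$ one has $x<a$, hence $\dot z=\eps(a-x)>0$, so $z$ increases monotonically along the shadowing orbit and stays at or below $z(\mathbf{e})=a$ up to corrections controlled by the tubular radius and by the $O(\eps)$ tilt of the fast eigenvectors; conversely, the deeper the entry point lies in the funnel of $\mathcal{S}_\eps^a$, the closer to $\mathcal{S}_\eps^r$ the orbit lands after the connection, the longer it shadows $\mathcal{S}_\eps^r$, and the closer to $\mathbf{e}$ it exits. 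Hence the way-in/way-out function is bounded and accumulates at $z=a$, identifying $\mathbf{e}$ as the buffer point. Finally, since $l=-\eps<0$, \cref{lema:l.hopf}(b) places the Hopf-like bifurcation at $z=\mu$, so the maximal delay measured from there is $z_d=a-\mu$, attained at the buffer point $\mathbf{e}$.

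I expect the main obstacle to be the quantitative control of the orbit near the saddle-focus $\mathbf{e}$: proving that the exit $z$-coordinate does not overshoot $z(\mathbf{e})=a$ and that it can be forced arbitrarily close to $a$ requires a Shilnikov-type transition map (or a $\lambda$-lemma argument) in the right linearity zone, combined with a statement of how the post-connection distance to $\mathcal{S}_\eps^r$ depends transversally on the entry position in the funnel — essentially the expansion/contraction bookkeeping behind \cref{lema:dist} and \cref{thm:3r.simple}, now with the blocking equilibrium as the extra ingredient. The remaining steps (the equilibrium computation, membership in $\mathcal{S}_\eps^r$, the reduced flow, and the location of the Hopf-like point) are short and follow directly from \cref{lema:m.k.bfpoint} and \cref{lema:l.hopf}.
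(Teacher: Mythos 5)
Your argument is correct and follows essentially the same route as the paper's (much terser) proof: the equilibrium $(a,f(a),a)$ lies on $\mathcal{S}_\eps^r$ and, since $\lambda_R<0$, is the forward limit of the flow on the repelling slow manifold, so it blocks the passage and forces every nearby orbit's exit $z$-coordinate to accumulate at $z=a$, giving $z_d=a-\mu$ measured from the switching plane $\{x=\mu\}$. Your extra bookkeeping (locating $\mathbf{e}$ in the right zone, the reduced flow $\dot r_R=\lambda_R r_R$, and the caveat about quantitative control near the saddle-focus) only makes explicit what the paper leaves informal.
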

\begin{proof}
Under the conditions of~\cref{lema:m.k.bfpoint} both slow manifolds connect and the equilibrium point at $(a,f(a),a)$ is the limit of the solution on the repelling side. Therefore, the $z$ coordinate of the exit point of any orbit coincides with that of the buffer point and then, the distance from the right switching line to the exit point is $a-\mu$.
\end{proof}
\cref{th.delay.buf.point} describes the behavior of the delay in the presence of a buffer point. In~\cref{fig:in.out.buffer}, we show this behavior through the plot of the way-in/way-out function. This plot has a theoretical maximum value at $a-\mu$. However, for the simulations, we consider a truncated expression of the series appearing in \cref{lema:m.k.bfpoint}, and consequently the slow manifolds do not exactly connect and the coordinate of the exit point from the tubular neighbourhood tends to a value lower than the theoretical one. The curve in~\cref{fig:in.out.buffer} was obtained by computing 4 terms in the series expansion of $m(a,\varepsilon)$ and $k(a,\varepsilon)$. If we add new terms in the series expansion then we will get the asymptotic value to be closer to this theoretical value. This is in contrast to~\cref{fig:in-out}, where the way-in/way-out function grows indefinitely as we improve the numerical precision. 

\begin{figure}[h]
    \centering
    \includegraphics[scale=0.41]{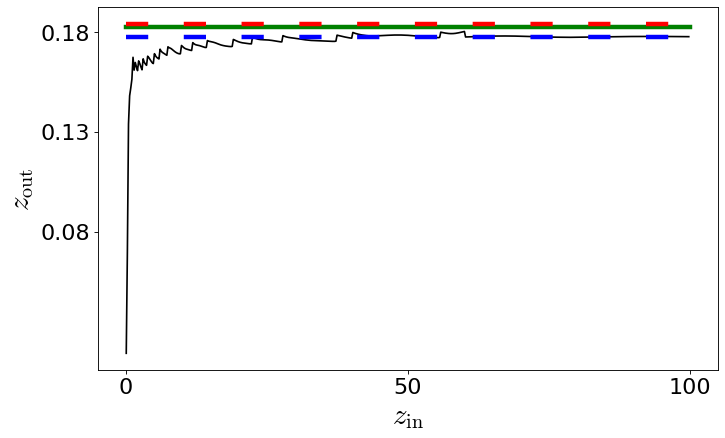}
    \caption{\textbf{Way-in/way-out function in the presence of a buffer point}, defined by considering a tubular neighbourhood of radius $1$ for $a=0.2$, $\epsilon=0.05$, $k\approx 0.3915$ and $m\approx0.3990$, which are given by the first four terms of the series expansion in~\cref{lema:m.k.bfpoint}. The green line corresponds to the distance from the buffer point to the boundary at $x=\mu$, i.e. $a-\mu$; the dashed red line corresponds to the distance from the exit coordinate of the repelling slow manifold to the boundary; the dashed blue line corresponds to distance from the  exit coordinate of the attracting slow manifold to the boundary.}    \label{fig:in.out.buffer}
\end{figure}

Another illustration of the lack of precision in the connection between the slow manifolds is provided in~\cref{fig:SP3d_Buf}. Under the connection conditions given by~\cref{lema:m.k.bfpoint} and satisfied only up to the fourth term, the orbit through the attracting slow manifold leaves the repelling slow manifold instead of approaching the equilibrium. 

As mentioned above, this lack of precision is the cause of the discrepancy between~\cref{th.delay.buf.point} and the results presented in~\cref{fig:in.out.buffer}. What is more, this effect is amplified because of the stiffness of the problem.
\begin{figure}[h]
    \centering
    \includegraphics[scale=0.35]{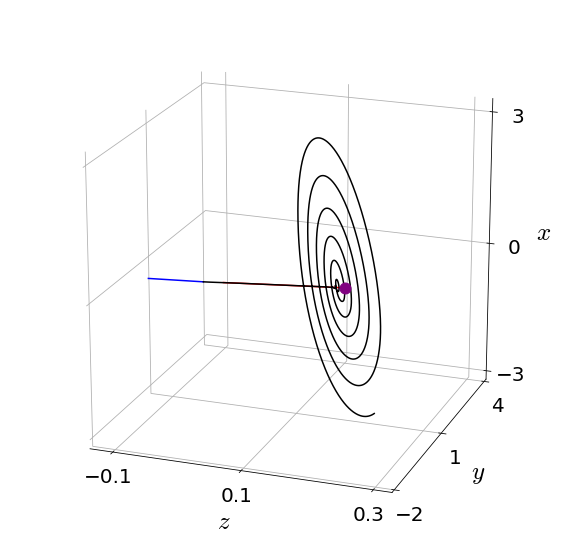}
    \caption{\textbf{Solution of the system with a buffer point.} Plot of the solution (in black), the attracting (blue) and repelling (red) slow manifolds and the buffer point (purple dot), with $a=0.2$, $\eps=0.05$ and the other parameters $k\approx 0.3915$ and $m\approx0.3990$ are fixed for the connection. We take an initial condition  on the attracting slow manifold.}
    \label{fig:SP3d_Buf}
\end{figure}

\section{Application to the Doi-Kumagai neural burster} \label{sec:4dk}
The Doi-Kumagai (DK) model \cite{DKModel} is a very simplified neural burster obtained by the piecewise linearization of a 3-dimensional extension of the Bonhoefer-van der Pol oscillator; see  \cite{HMS85} for details. Its equations read

\begin{equation}\label{eq:DK}
\begin{split}
    \dot{x} & = f(x)-y-z+I, \\
    \dot{y} & = \eta(x-ay), \\
    \dot{z} & = \epsilon(x-bz),
\end{split}
\end{equation}
where
$$
f(x) = -x + \lvert x+1\rvert-\lvert x-1\rvert,
$$
and $\eps$ is a small parameter, i.e., $0<\epsilon\ll 1$. 

As shown in \cite{DKModel,DK05}, the DK model exhibits complex oscillations of elliptic bursting type. This periodic behavior can be understood through the slow dynamics of variable $z$, which drives the fast variables $(x,y)$  between quiescence (resting states of the fast subsystem) and burst regimes (spiking states of the fast subsystem). In particular, in the DK model the slow variable $z$ drives the system from quiescence to burst through a delayed subcritical Hopf bifurcation, and it organises the reverse transition through a delayed fold bifurcation of limit cycles. 

\cref{fig:DK3D}(a) displays a bursting periodic orbit of the DK model, superimposed onto the bifurcation diagram of its fast subsystem. Therefore we also show the critical manifold as family of equilibria of the fast subsystem, together with the subcritical Hopf bifurcation point at the beginning of the oscillatory regime. The fast subsystem possesses two families of limit cycles, which we represent as transparent surfaces: the invariant cone (red) formed by the unstable limit cycles born at the subcritical Hopf bifurcation, as well as the invariant cylinder (blue) of stable limit cycles born through a restabilizing fold of cycles bifurcation, which ends the burst phase in the full system.

The bursting phenomenon in the DK model is obtained by setting parameters in such a way that the system presents a unique equilibrium point. This equilibrium point is an attracting node-focus when it is in the lateral regions, and it becomes a saddle-focus when it is in the central region, which is the case for the bursting cycle presented in~\cref{fig:DK3D}. 

From \cite{DKModel} we consider the bursting regime with parameter values $a = 0.8$, $\eta = 0.5$ and $b = 0.5$. Parameter $I$ is chosen such that $-2.25<I<2.25$, in order for the DK model to have two virtual equilibrium points in the lateral regions and a real equilibrium point $\mathbf{e}=-\frac I{1-1/a-1/b}(1,\frac 1 a,\frac 1 b)$ in the central one.

\begin{figure}[h]
    \centering
    \begin{tabular}{cc}
    \includegraphics[scale=0.32]{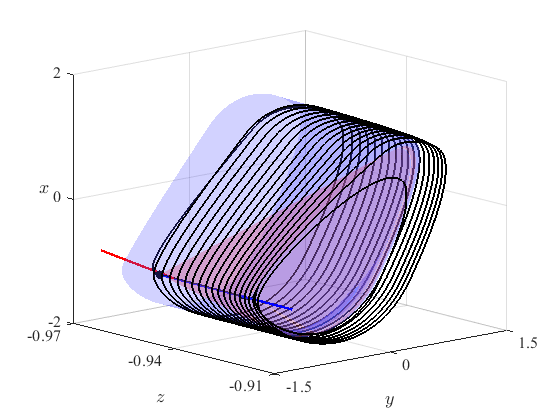}  & \includegraphics[scale=0.18]{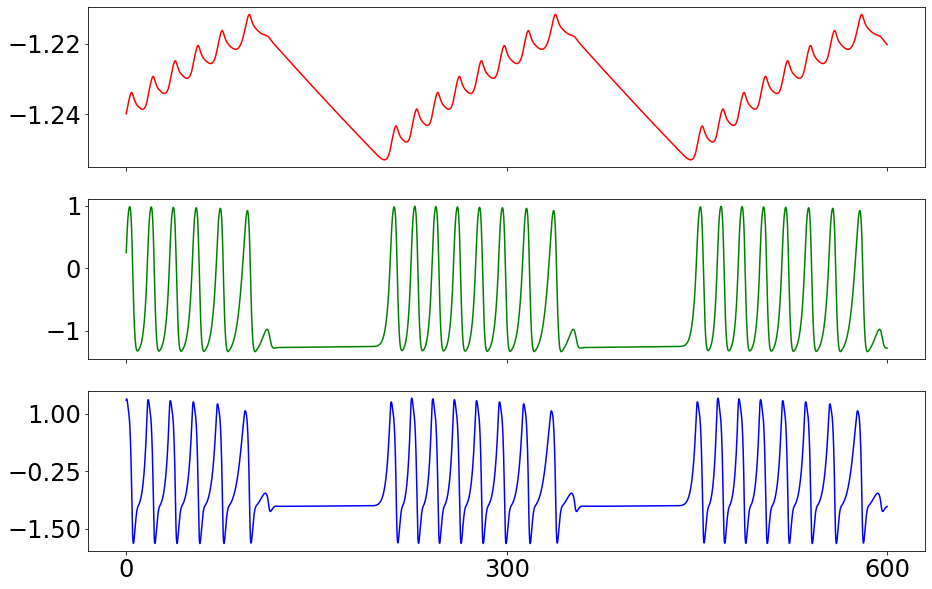} \\
    (a) & (b) 
    \end{tabular}
    
    \caption{\textbf{Eliptic bursting in the DK model}. (a) A 3 dimensional view of a bursting  cycle in the DK model. We can see the canonical slow manifolds (the repelling slow manifold in red and the attracting one in blue), the unstable cone (in red) and the stable cylinder (in blue) of fast subsystem limit cycles, together with a bursting orbit. (b) Time-series of the solution plotted in (a); $x(t)$ in blue, $y(t)$ in green and $z(t)$ in red. We take $I=-1.5$ and $\eps=10^{-3}$.}
    \label{fig:DK3D}
\end{figure}

The critical manifold of the DK model is the polygonal curve 
\[
\mathcal{S}_0=\Big\{(x,y,z): f(x)-y-z+I=0,\;x-ay=0\Big\}.
\]
It is normally hyperbolic everywhere except at the points $(-1,-\frac 1{a},-1+\frac 1{a}+I)$ and $(1,\frac 1{a},1-\frac 1{a}+I)$, located at the intersection between $\mathcal{S}_0$ and the switching planes $\{x=\pm1\}$, at which the equilibrium points change stability.

Owing to the extension of  Fenichel Theory to PWL slow-fast systems developed in \cite{ProhensVich}, the attracting and repelling branches of the critical manifold will perturb for $\eps>0$ small enough, to attracting and repelling slow manifolds, respectively. In~\cref{fig:DK3D}(a) we show the canonical attracting slow manifold (in blue), contained in the half-space $L=\{x<-1\}$ (left region), as well as the canonical repelling slow manifold (in red), contained in the strip $M=\{-1<x<1\}$ (middle region). In the following result we  give conditions for the slow manifolds to connect. 

\begin{lemma} \label{lema:dk.connection}
Consider the DK system~\eqref{eq:DK}. For $\eps$ small enough, the attracting slow manifold (in the half-space $L$) and the repelling slow manifold (in the middle strip $M$) connect if, and only if, the equilibrium point $\mathbf{e}$ is located on the boundary $\{x=-1\}$.
\end{lemma}

\begin{proof}
For $\eps$ small enough, we can consider $\eta a\neq \eps b$. In this case, the attracting and the repelling slow manifolds are given by the line segments parameterised by $\tau$
\begin{equation} \label{eq:u_J}
\mathbf{u}_j(\tau) = \Big\{\mathbf{u}_{0,j} + \tau \mathbf{v}_j \Big\}, \quad \mathbf{v}_j=\left( (\lambda_j+\eta a)(\lambda_j +\eps b), \eta(\lambda_j+\eps b), \eps(\lambda_j + \eta a)\right)^T
\end{equation}
where $j\in\{L,M\}$, $\mathbf{u}_{0,j}=(x_{0,j},y_{0,j},z_{0,j})$ is the equilibrium point in the $j$ region, $\lambda_j$ is the slow eigenvalue in the $j$ region and $\mathbf{v}_j$ the eigenvector associated  with $\lambda_j$. Parameter $\tau$  then satisfies: $x_L(\tau)=x_{0,L} + \tau(\lambda_L+\eta a)(\lambda_L +\eps b)<-1$ and  $|x_M(\tau)|<1$; 
see~\cite{ProhensVich} for details. 

Suppose that the slow manifolds connect on the boundary $x=-1$. Therefore, there exist $\tau_L$ and $\tau_M$ such that (i) $\mathbf{u}_L(\tau_L)=\mathbf{u}_M(\tau_M)$ and (ii) their first coordinates $x_L(\tau_L)=x_M(\tau_M)=-1$. Hence, from  (ii) we obtain that
$$
\tau_j = -\frac{1+x_{0,j}}{(\lambda_j+\eta a)(\lambda_j+\eps b)},\quad j\in{L,M}.
$$

\begin{figure}[!t]
\begin{tabular}{cc}
\includegraphics[scale=0.235]{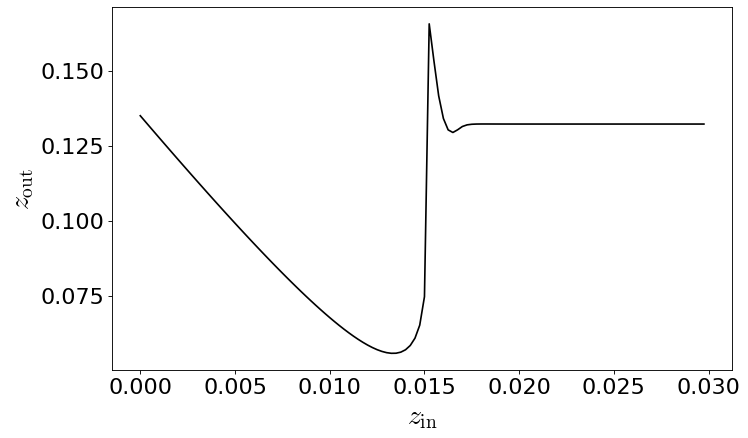} & \includegraphics[scale=0.235]{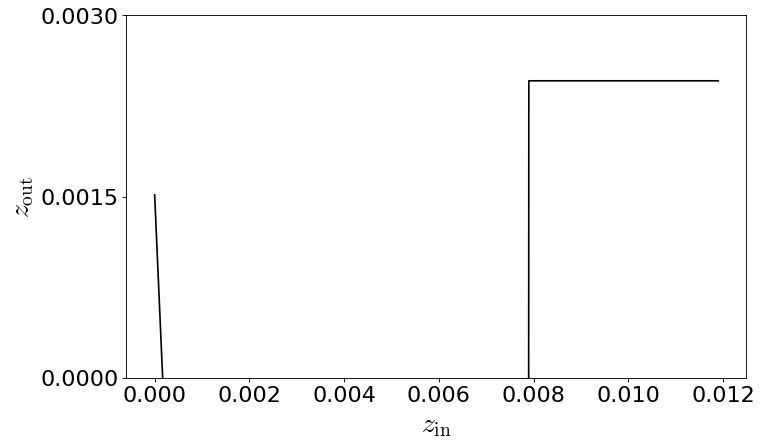}  \\
(a) & (b)
\end{tabular}
\caption{\textbf{Way-in/way-out function and maximal delay for the DK model}, computed for the parameter values $a = 0.8$, $\eta = 0.5$, $b = 0.5$, $I=2$, and; (a) $\varepsilon=10^{-3}$; (b) $\varepsilon=10^{-5}$. After an initial decrease, the function increases and stabilises around the value of the maximal delay. Observe that, as $\varepsilon$ is divided by $10^{2}$, so the maximal delay is divided by a similar amount.}
\label{fig:DK3_in.out}
\end{figure} 
 From (ii), equalling $y_L(\tau_L)=y_M(\tau_M)$ and $z_L(\tau_L)=z_M(\tau_M)$ in expression~\eqref{eq:u_J} and considering also the $\tau_j$ expressions obtained, we obtain the following linear system depending on the $x$-coordinates of the equilibrium points,
\begin{equation} \label{eq:systemX0LX0M}
\begin{array}{l}
    \displaystyle \frac{x_{0,L}}{a} - \frac{(1+x_{0,L})\eta}{(\lambda_L+\eta a)} = \frac{x_{0,M}}{a} - \frac{(1+x_{0,M})\eta}{(\lambda_M+\eta a)}, \\
    \displaystyle \frac{x_{0,L}}{b} - \frac{(1+x_{0,L})\epsilon}{(\lambda_L+\epsilon b)} = \frac{x_{0,M}}{b} - \frac{(1+x_{0,M})\epsilon}{(\lambda_M+\epsilon b)}. \end{array}
\end{equation}
Notice that the determinant of the matrix associated with each system is given by 
$$
\det (\Lambda) = \frac{-\lambda_L\lambda_M(\lambda_M-\lambda_L)(\epsilon b-\eta a)}{ab(\lambda_L+\eta a)(\lambda_M+\eta a)(\lambda_L+\epsilon b)(\lambda_M+\epsilon b)}.
$$
In order to see if system~\eqref{eq:systemX0LX0M} has a unique solution, we have to determine whether the previous determinant is zero or not. To do so, we consider the characteristic polynomials of system~\eqref{eq:DK} in each region, which are given by 
$$
p_j(\lambda)=-\lambda^3 + (f'-\epsilon b-\eta a)\lambda^2 -(\eta+\epsilon -\epsilon bf'+\epsilon b\eta a - \eta af')\lambda - \eta \epsilon( b+a - ba f')
$$
where $f'$ stands for the derivative of $f(x)$ in the corresponding region $j\in\{L,M\}$. Since the determinant is zero when $\lambda_L=\lambda_M$, this leads to $p_L(\lambda_L)-p_M(\lambda_L)=0$. Thus,
$$
\lambda_L^2 + (\epsilon b+ \eta a)\lambda_L + \epsilon b\eta a = 0,
$$
which implies that either $\lambda_L=-\eps b$ or $\lambda_L=-\eta a$. In any case, $p_j(-\eta a)=\eta (\eta a-\eps b)$, $p_j(-\eps b)=\eps(\eps b- \eta a)$ and both imply that $\eps b=\eta a$. Therefore, $\det(\Lambda)\neq 0$ and so the system has a unique solution at $x_{0,L}=x_{0,M}=-1$. In particular, this solution is the equilibrium point of the differential equation located in between the half-space $L$ and the middle strip $M$.
\end{proof}

Following~\cref{lema:dk.connection}, when the equilibrium point is not on the boundary, the attracting and the repelling slow manifolds do not connect. Locally, this configuration is similar to those analyzed in~\cref{sec.2regimes}, \cref{thm:du}. We concluded that in this no-connection scenario, the maximal delay decreases to zero as $\varepsilon$ tends to zero. This behavior of the maximal delay can hence be observed in the DK model; see~\cref{fig:DK3_in.out} where the way-in/way-out function is plotted for two different values of $\varepsilon$. In panel (a) we take $\varepsilon=10^{-3}$, whereas in panel (b) $\varepsilon=10^{-5}$.  In the latter case we observe that the maximal delay is also divided by approximately $10^{2}$, which suggests a  dependence on $\varepsilon$. \cref{thm:du} also provides, in the minimal model, the dependence of the maximal delay on $\varepsilon$, for $\varepsilon$ small enough. In particular, the maximal delay decreases like $u_1\varepsilon+u_2\varepsilon \ln(\varepsilon)$. In~\cref{fig:maxdelay.v.eps}, we represent the maximal delay in the DK model as a function of $\eps$ for two values of parameter $I$. For suitable values of the constants $u_1$ and $u_2$ we also trace the curve $u_1\varepsilon+u_2\varepsilon \ln(\varepsilon)$ which shows that it fits well the predicted behavior of the maximal delay. On the other hand, as $\varepsilon$ increases, the maximal delay tends to the value $1$ or $1/2$, which corresponds with the coordinate of the equilibrium point for the chosen $I$ value, that is, when $I=2$ or $I=0$, respectively. 

\begin{figure}[h]
\centering
 \includegraphics[scale=0.5]{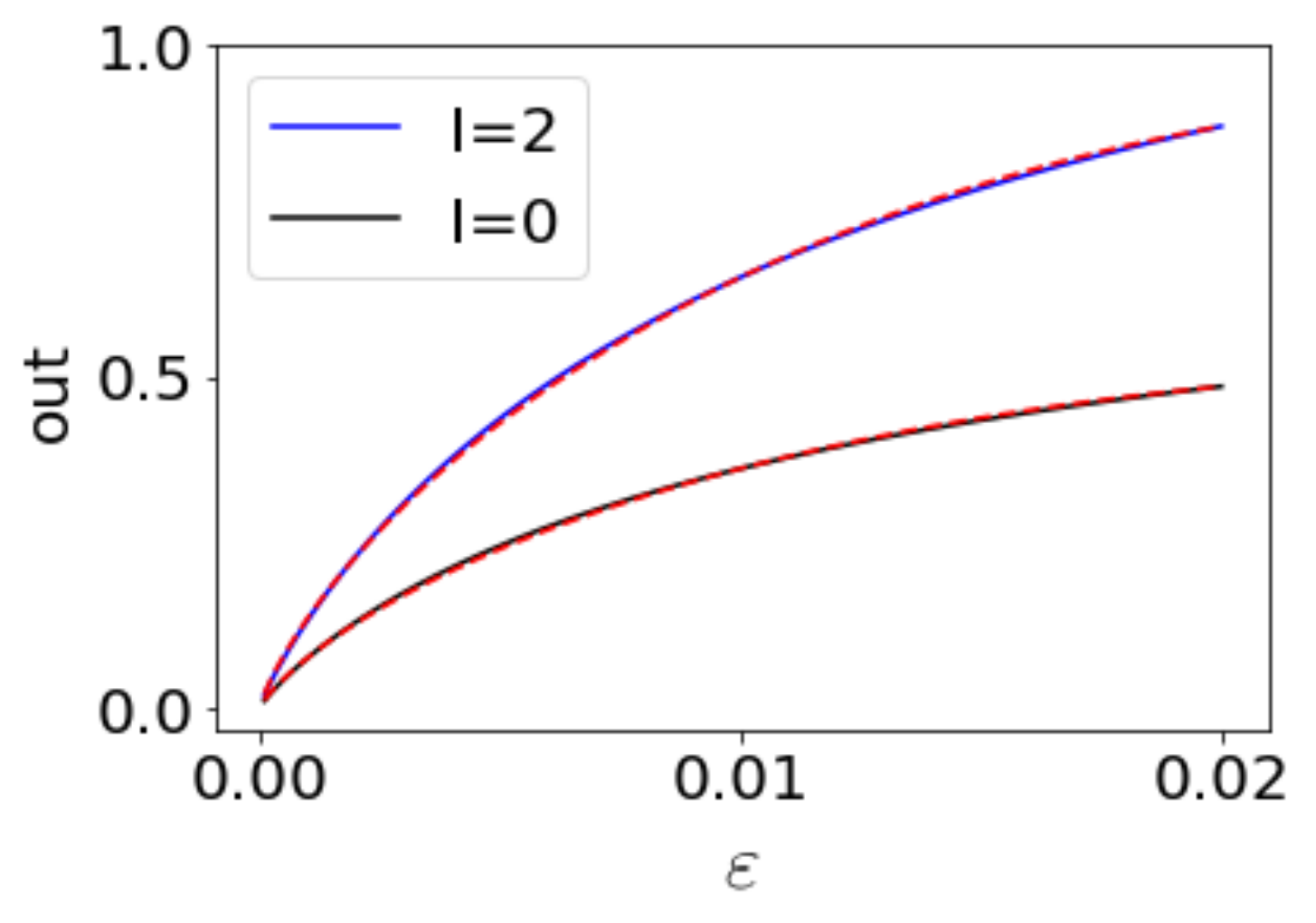}
 \caption{\textbf{Maximal delay as a function of $\eps$ in the DK model}, computed over an interval of $\eps$-values, for two different values of parameter $I$. These values are obtained by setting initial conditions on the attracting slow manifold at $\{x=-1\}$. The remaining parameter values are $a = 0.8$, $\eta = 0.5$ and $b = 0.5$, and the radius of the  tubular neighbourhood is equal to $1$. For suitable values of the constants $u_1$ and $u_2$, we also plot the curve $u_1 \varepsilon+u_2 \varepsilon \ln(\varepsilon)$ (dashed), which fits well with the computed maximal delay curves. }\label{fig:maxdelay.v.eps}
\end{figure}
Next, we slightly modify the DK model  in order to allow a connection between the attracting slow manifold (contained in the half-space L) and the repelling one (in the middle strip M). That is, we add an extra linearity zone in between these two regions. Since we maintain the equilibrium in the middle strip $\{|x|\leq 1\}$, the expected behavior of the slow passage is the one described in~\cref{sec:buff.point} about the presence of a buffer point, in particular in~\cref{th.delay.buf.point}.
Therefore, let us consider system~\eqref{eq:DK}, where we rewrite $f$ as
$$
f(x) = -x-2+(1+s)\frac{\lvert x-\rho\rvert+(x-\rho)}{2} + (1-s)\frac{\lvert x-\mu\rvert+(x-\mu)}{2} -\lvert x-1\rvert-(x-1).
$$
This extended version of the DK model has 3 new parameters, namely $\rho$ and $\mu$, which are the left and the right boundary of the new region, respectively, and parameter $s$, which is the slope of the $x$-nullcline in this new region. Considering $\mu = \frac{\rho(s+1)+2}{s-1}$, we set one of these parameters in such a way that, for $x\not\in(\rho,\mu)$, the vector field is exactly the same as the classical DK model.

The slow manifolds in the modified DK are given by equation \eqref{eq:u_J}. Hence, the intersection points with these manifolds are given by
\begin{align*}
& \tilde{\mathbf{p}}^a = \mathbf{u}_{0,L} + \rho((\lambda_L+\eta a)(\lambda_L +\eps b),\eta(\lambda_L+\eps b), \eps(\lambda_L+\eta a)), \\
& \tilde{\mathbf{p}}^r = \mathbf{u}_{0,R} + \mu((\lambda_R+\eta a)(\lambda_R +\eps b),\eta(\lambda_R+\eps b), \eps(\lambda_R+\eta a)).
\end{align*}

In order to guarantee the connection between the two slow manifolds, we solve the following system
\[\varphi(t;0,\tilde{\mathbf{p}}^a) = \tilde{\mathbf{p}}^r \]
by applying the shooting method considering $\eps,\rho$ and $s$ as unknowns. This procedure allows us to obtain the aforementioned connection up to a prescribed error.

\begin{figure}[h]
    \centering
    \begin{tabular}{cc}
    \includegraphics[scale=0.33]{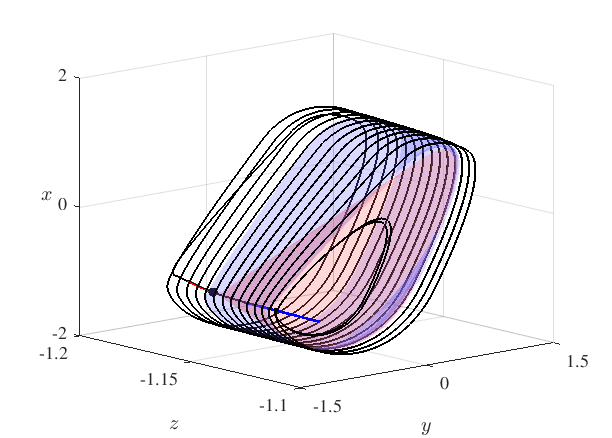}  & \includegraphics[scale=0.17]{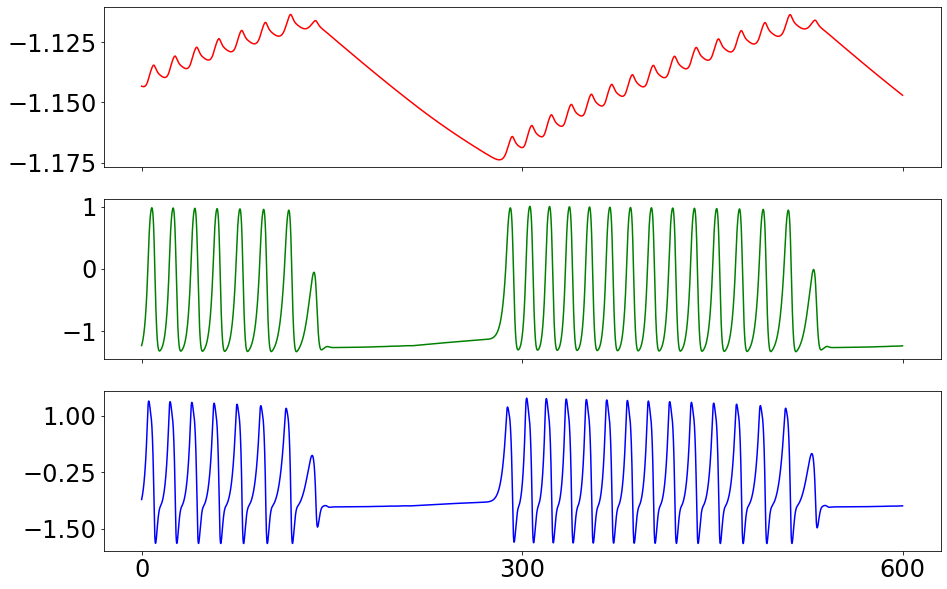} \\
    (a) & (b) 
    \end{tabular}
    
    \caption{\textbf{Eliptic bursting in the modified DK model}. (a) 3 dimensional view of the bursting cycle in the extended DK model, with a fourth region between the L and M regions.  Also shown are the invariant branches of the critical manifold (the repelling branch in red and the attracting one in blue), the unstable cone (in red) and the stable cylinder (in blue) of fast subsystem limit cycles. (b) Time-series of the solution plotted in (a); $x(t)$ in blue, $y(t)$ in green and $z(t)$ in red. We take $I=-1.4$ and $\eps=10^{-3}$.}
    \label{fig:DK3D2}
\end{figure}

Under suitable conditions for the connection to occur,~\cref{fig:DK3D2} shows a bursting cycle in the modified DK model, whose way-in/way-out function takes the form depicted in~\cref{fig:DK4_in.out}. Following~\cref{th.delay.buf.point}, the way-in/way-out function has to reach the same plateau value for every value of $\varepsilon$, this plateau value being  related to the position of the equilibrium point. However, after comparing panels (a) and (b) in~\cref{fig:DK4_in.out}, the maximal delay seems to decrease when $\eps$ gets smaller, hinting at the fact that the connection has not occurred. This discrepancy is not only related to the tolerance of the shooting method we use to approach the connection conditions, but also affected by the strong repulsion of the repelling slow manifold. 

To end this section, we study the latter effect through the analysis of the ratio between the real eigenvalue $\lambda_M$ and the real part $\alpha_M$ of the complex one, which organises the linear dynamics of the system in the strip $\{\mu <x< 1\}$. We have

\begin{figure}[h]
\begin{tabular}{cc}
\includegraphics[scale=0.235]{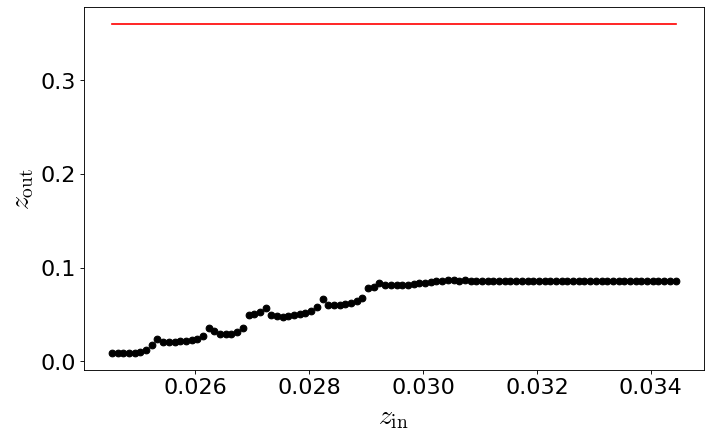} & \includegraphics[scale=0.235]{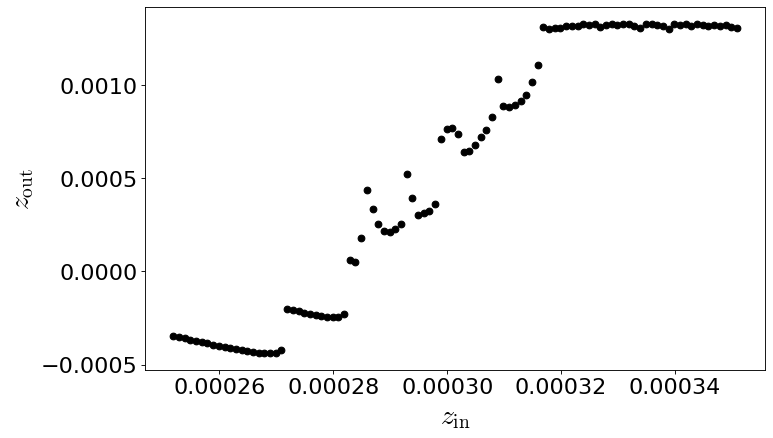}  \\
(a) & (b)
\end{tabular}
\caption{\textbf{Way-in/way-out function in the modified DK model.} Fixed parameter values are $a = 0.8$, $\eta = 0.5$, $b = 0.5$, and $I=-1.4$ (a) $\varepsilon=10^{-3}$, $s=0.4552$ and $\rho=-1.0067$, with the radius of the tubular neighbourhood as $\delta=10^{-2}$, (b) $\varepsilon=10^{-5}$, $s=0.461129$ and $\rho=-1.000068$, with $\delta=10^{-2}$. The paramaters $s$ and $\rho$ are chosen to guarantee a connection between the attracting and the repelling slow manifolds. The red line in panel (a) corresponds to the buffer point value, which is not shown in (b) for a clearer representation. }
\label{fig:DK4_in.out}
\end{figure}

\begin{align*}
\lambda_M = \frac{1}{a+1}(ab-a-b)\varepsilon + O(\varepsilon^2),\\
\alpha_M = \frac{1-a\eta}{2} + \frac{a}{2(a+1)}(1-2b)\varepsilon + O(\varepsilon^2).
\end{align*}

As can be seen, $\lambda_M$ is of order 1 in $\varepsilon$, whereas $\alpha_M$ is of order 0 in $\varepsilon$. As a result, orbits escape very fast from the repelling slow manifold, increasing the stiffness of the problem as $\varepsilon$ tends to zero. In order to avoid this effect, we set parameter $\eta$ such that 
\begin{equation}\label{eq:Mod_DK_nonstiff}
\eta=1/a + \eta_1 \varepsilon \quad \text{with}\quad \eta_1<0,
\end{equation}
which allows for $\lambda_M$ and $\alpha_M$ to be of the same order in $\varepsilon$. In~\cref{fig:in.out.eta.dk}, we show the way-in/way-out function of the modified Doi-Kumagai model, after imposing the relation \eqref{eq:Mod_DK_nonstiff}. As can be observed, the maximal delay behaves in the way described in~\cref{th.delay.buf.point}, which predicts that the maximal delay will be reached at the buffer point, located at $a-\mu$, independently of the chosen $\eps$ value.

\begin{figure}[ht]
    \begin{tabular}{cc}
    \includegraphics[scale=0.235]{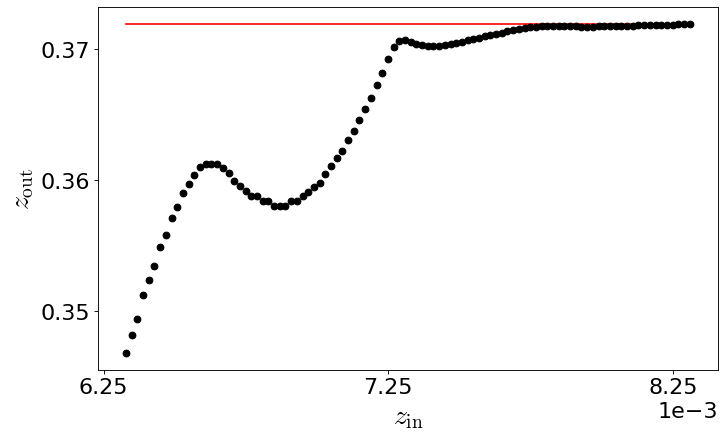}  &
    \includegraphics[scale=0.235]{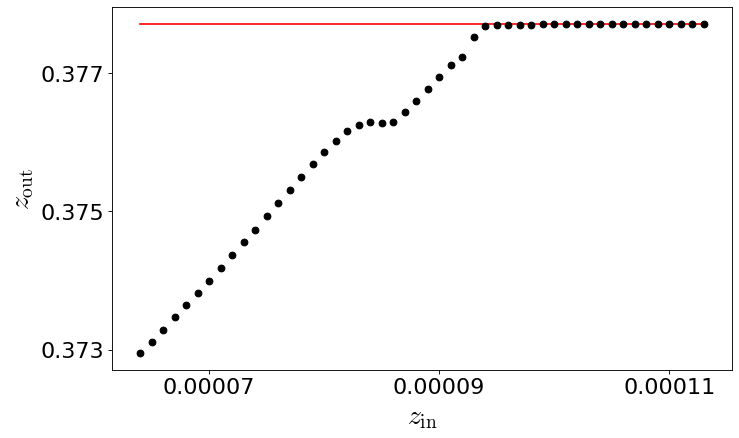}  \\ (a) & (b)
    \end{tabular}
    \caption{\textbf{Way-in/way-out function in the modified DK model, with eigenvalues of the same order.} Fixed parameter values are $\eta=1/a - 10\varepsilon$, $a=0.8$, $b=0.5$ and $I=-1.4$. The red lines correspond to the buffer point. (a) $\varepsilon=10^{-3}$, $s=0.6165$ and $\rho=-1.0018$, with $\delta=10^{-2}$, (b) $\varepsilon=10^{-5}$, $s=0.625649$ and $\rho=1.000018$, with $\delta=10^{-2}$. Parameters $s$ and $\rho$ are chosen to guarantee a connection between the attracting and the repelling slow manifolds.
    } 
    \label{fig:in.out.eta.dk}
\end{figure}

\section{Discussion}
In this paper, we have analyzed the phenomenon of slow passage through a Hopf bifurcation in the context of piecewise linear slow-fast dynamical systems, both qualitatively and quantitatively. Delayed loss of stability is a common behavior naturally present in smooth systems and we have shown that the PWL framework allows to reproduce  this delay. However, different scenarios can arise depending on the minimal model we use, as described in all cases through the way-in/way-out function and the maximal delay's asymptotic value.

We first considered a minimal PWL model exhibiting a delayed loss of stability, in such a way that the maximal delay $z_d$ depends on $\eps$, and tends to zero with $\eps$. We obtained an explicit expression for the dependence of $z_d$ on $\eps$ by taking advantage of the PWL framework. The main ingredient for such a behavior of the maximal delay is the $O(\eps)$ distance between the attracting and the repelling slow manifolds.

We then extended this minimal PWL model by introducing a third linearity region allowing the connection between the canonical slow manifolds. This strategy has proven useful to fully analyse PWL  versions of smooth slow-fast dynamics near a quadratic fold of the critical manifold, that is, recovering also the canard regime; see~\cite{desroches2016,fernandez2016canard}. Hence, the connection of the canonical slow manifolds allowed the maximal delay to tend to infinity. This behavior is more similar to what happens in smooth systems. However, because in the PWL context the divergence along the repelling slow manifold remains constant from the moment it crosses the separation plane, the slow passage phenomenon has a very unstable character. Hence, given that the conditions for a connection to occur are not, in practice, entirely satisfied, and due to round-off errors during the simulations, the computed way-in/way-out function always behaves as if the maximal delay has a finite value that cannot be exceeded, similar to a buffer point. 

In order to study the presence of a proper buffer point, we also considered the case of a three-region system having an equilibrium point in the central region. Results show the appearance of a boundary for the solutions, which does not depend on the previously observed drawbacks, revealing the existence of the buffer point.

Since the slow passage phenomenon is a key element to generate bursting dynamics, we checked the validity of our results on a PWL elliptic bursting model, namely, the DK model. This adds to the existing literature on bursting dynamics in the PWL context~\cite{deng2009conceptual,desroches2016canards}. However, the slow passage phenomenon in this model appears to be too weak since, as in~\cref{sec.2regimes}, the distance between the attracting and the repelling slow manifold is linear with $\eps$. To make it stronger, we added to the DK model a new linearity region allowing for the connection between both the attracting and the repelling slow manifolds. However, even though we could connect them, the problem appeared to be so unstable that we could not observe any significant difference with the case where the slow manifolds do not connect. Thus, even when theoretically the maximal delay must approach the buffer point independently of $\varepsilon$, in practice, the numerical precision cannot afford the stiffness of the problem and the maximal delay decreases with $\varepsilon$.

To overcome the stiffness problem, we finally studied a suitable parameter set of the modified DK system, not only allowing for the connection, but also providing the real eigenvalue and the real part of the complex eigenvalues of the linear system in $M$ to be of the same order in $\varepsilon$. This reduced the stiffness of the system and maintained the maximal delay high, irrespective the value of $\varepsilon$. However, further efforts would be needed to find suitable parameter sets allowing to exhibit bursting oscillations for this new scenario as well. This is an interesting question for future work.

\section{Acknowledgments}
JP, AET and CV are partially supported by the Ministerio de Ciencia, Innovación y Universidades (MCIU) project PID2020-118726GB-I00. AET and CV are also partially supported by the Ministerio de Economia y Competitividad through the project MTM2017-83568-P (AEI/ERDF,EU). 

\bibliographystyle{plain}
\bibliography{references}

\appendix
\section{Solutions of the systems}\label{app:solutions}
Considering the two-region system \eqref{sys:2pieces}-\eqref{eq:x-nullcina1}, its local solution through the point $\mathbf{p}=(x_0,y_0,z_0)$, which we denote as $\mathbf{u}(t;0,\mathbf{p})$, has the first and second coordinates given by 
\begin{equation}\label{eq:xL}
\begin{split}
x_L(t;0,\mathbf{p})&=z(t;0,\mathbf{p})-m\varepsilon+
  \mathrm{e}^{-\frac {m}{2}t} \Big(
      (x_0-z_0+m\epsilon) \cos\left(\xi_m t \right) \\
      &
      +\frac{-m(z_0-m\epsilon+x_0)-2(y_0+\epsilon)}{\sqrt{4-m^2}} \sin\left(\xi_m t \right)
    \Big),\\
y_L(t;0,\mathbf{p})&=-m(z(t;0,\mathbf{p})-m\varepsilon)-\varepsilon\\
  &
  +\mathrm{e}^{-\frac{m}{2}t} \Big(
      (m(z_0-m\epsilon)+(y_0+\epsilon))\cos\left(\xi_m t \right) \\
      &
      +\frac{m^2(z_0-m\epsilon)+m(y_0+\epsilon)+2(x_0-z_0+m\epsilon)}{\sqrt{4-m^2}} \sin\left(\xi_m t \right)
    \Big),
\end{split}
\end{equation}
if it is contained in the left region, and
\begin{equation}\label{eq:xR}
\begin{split}
x_R(t;0,\mathbf{p})&=z(t;0,\mathbf{p})+k\varepsilon+
  \mathrm{e}^{\frac {k}{2}t} \Big(
      (x_0-z_0-k\epsilon) \cos\left(\xi_k t \right) \\
      &
      +\frac{k(z_0+k\epsilon+x_0)-2(y_0+\epsilon)}{\sqrt{4-k^2}} \sin\left(\xi_k t \right)
    \Big),\\
y_R(t;0,\mathbf{p})&=k(z(t;0,\mathbf{p})+k\varepsilon)-\varepsilon\\
  &
  +\mathrm{e}^{\frac {k}{2}t} \Big(
      (-k(z_0+k\epsilon)+(y_0+\epsilon))\cos\left(\xi_k t \right) \\
      &
      +\frac{k^2(z_0+k\epsilon)-k(y_0+\epsilon)+2(x_0-z_0-k\epsilon)}{\sqrt{4-k^2}} \sin\left(\xi_k t \right)
    \Big),
\end{split}
\end{equation}
if it is contained in the right region, where $\xi_m = \frac {\sqrt{4-m^2}}2$, $\xi_k = \frac {\sqrt{4-k^2}}2$.  The third coordinate remains similar in all regions and it is the slow drift given by $z(t;0,\mathbf{p}) = z_0 + \eps t$.

In the case of the three-region system  \eqref{sys:2pieces}-\eqref{eq:f.3pw}, the local solution through $\mathbf{p}$ is given by \eqref{eq:xL} and \eqref{eq:xR}, if it is contained in the left or right regions and by

\begin{equation}\label{eq:xC}
\begin{split}
x_C(t;0,\mathbf{p}) & =z(t;0,\mathbf{p}) + l\varepsilon +
  \mathrm{e}^{\frac{l}{2}t} \Big(
      (x_0-(z_0+l\epsilon)) \cos\left(\xi_l t \right) \\
       &  +
      \frac{1}{\xi_l}(\frac{l}{2}(x_0+z_0+l\epsilon)+n-(y_0+\epsilon)) \sin\left(\xi_l t \right)
    \Big),\\
y_C(t;0,\mathbf{p})&=l(z(t;0,\mathbf{p})+l\varepsilon)+n-\varepsilon + \mathrm{e}^{\frac {l}{2}t} \Big(
    (y_0-(l(z_0+l\epsilon)+n-\epsilon)) \cos\left(\xi_l t \right)
      \\
    & + \frac{1}{2\xi_l}(l^2(z_0+l\epsilon)+2(x_0-(z_0+l\epsilon))+l(n-(y_0+\epsilon))) \sin\left(\xi_l t \right)
    \Big),
\end{split}
\end{equation}
if it is contained in the central region, where $\xi_l = \frac {\sqrt{4-l^2}}2$. The third coordinate is also $z(t;0,\mathbf{p}) = z_0 + \eps t$.

The local expression of the solution in the central region of the buffer-point system~\eqref{eq:odeBuffer}-\eqref{eq:f.3pw} is given by:
\begin{equation}\label{eq:uC}
\mathbf{u}_C(t;0,\mathbf{p})=P\mathrm{e}^{Jt}P^{-1}\left( \mathbf{p}+A^{-1}\mathbf{a}_0\right)-A^{-1}\mathbf{a}_0,
\end{equation}
where
$$
A = \begin{pmatrix}
-\epsilon & -1 & 0 \\
1 & 0 & -1 \\
-\epsilon & 0 & 0
\end{pmatrix}, \quad
P = \begin{pmatrix}
1 & 1 & 0 \\
0 & -\epsilon & -1 \\
1 & 0 & \epsilon
\end{pmatrix}, \quad
\mathbf{a}_0 = \begin{pmatrix}
n \\ 0 \\ \epsilon
\end{pmatrix},
$$
$$
\mathrm{e}^{Jt} = \begin{pmatrix}
\mathrm{e}^{-\epsilon t} & 0 & 0 \\
0 & \cos(t) & \sin(t) \\
0 & -\sin(t) & \cos(t)
\end{pmatrix}.
$$

\section{Way-in/way-out function} 
In this section we provide the pseudo-code algorithm used to compute the way-in/way-out function given a tubular neighborhood of radius $\delta$. When a two-region system is considered, we use $\mu=0$ while in the three-region systems, we consider $\mu$ being the $x$ boundary between the central region and the right one.

\RestyleAlgo{ruled}
\begin{algorithm}
\caption{Way-in/way-out function}
\label{alg:in-out}
\begin{algorithmic}[1]
\STATE{Define vector of $z$-coordinates $z_1,\ldots,z_N$}
\STATE{Let $\delta$ be the radius of the neighbourhood}
\FOR{each $i=1,\ldots,N$}
    \STATE\label{line3}{Let $\mathbf{p}_i=(z_i-m\varepsilon,\varepsilon-m(z_i-m\varepsilon)+\delta,z_i)$}
    \STATE{Solve the ODE system $\mathbf{u}(t_{i,j};0,\mathbf{p}_i)=(x(t_{i,j}),y(t_{i,j}),z(t_{i,j}))$}
    \STATE{Find the time $t_{i,s}$ such that $x(t_{i,s})=\mu$}
    \STATE{Take $j=s$ and $d_{i,j}=\delta$}
    \WHILE{$d_{i,j}\leq \delta$}
        \STATE{Find the plane $\pi(z_0,t_{i,j})$ generated by the complex eigenvectors}
        \STATE{Find the intersection point $\hat{\mathbf{u}}(t_{i,j})=(\hat{x}(t_{i,j}),\hat{y}(t_{i,j}),\hat{z}(t_{i,j}))= \mathcal{S}_{\varepsilon}^r\cap\pi(z_0,t_{i,j})$}
        \STATE{Find the distance $d_{i,j}=d(\hat{\mathbf{u}}(t_{i,j}),\mathbf{u}(t_{i,j};0,\mathbf{p}_i))$}
        \IF{$d_{i,j}>\delta$}
            \STATE{out: $\hat{z}_i = \hat{z}(t_{i,j})$}
        \ELSE \STATE{$j=j+1$}
        \ENDIF
    \ENDWHILE
\ENDFOR
\RETURN pairs $(z_i,\hat{z}_{i})$
\end{algorithmic}
\end{algorithm}

\end{document}